\documentclass[12pt]{amsart}
\usepackage{graphicx} 
\usepackage[margin=1in]{geometry}
\usepackage{amsmath, amsfonts, amsthm, amssymb}
\usepackage{hyperref}  
\usepackage{color} 
\usepackage{array} 
\usepackage{float} 
\usepackage[shortlabels]{enumitem}
\usepackage{ytableau}
\ytableausetup{centertableaux} 

\usepackage{mathtools}
\usepackage{cancel}
\usepackage{tikz} 
\usetikzlibrary{trees} 
\usepackage{pgfplots}
\usetikzlibrary{fillbetween}

\usepackage[english]{babel}
\usepackage[autostyle]{csquotes}

\newtheorem{theorem}{Theorem}[section]
\newtheorem{lemma}[theorem]{Lemma}
\newtheorem{proposition}[theorem]{Proposition}
\newtheorem{corollary}[theorem]{Corollary}

\newtheorem{problem}[theorem]{Problem}

\theoremstyle{definition}

\newtheorem{example}[theorem]{Example}

\title{Symmetry in Equivariant cohomology of $\mathbb{P}^n$}

\author{Duy Phan}
\address{Dept. of Mathematics, U. Illinois at Urbana-Champaign, Urbana, IL 61801, USA}
\email{duyphan2@illinois.edu}
\date{\today}

\begin{document}

\maketitle

\begin{abstract}
We resolve a problem of Anderson and Fulton by providing a symmetric and positive product rule for the equivariant cohomology of projective space.
\end{abstract}

\section{Introduction}

Let $X = \mathbb{P}^n$ be complex projective space. 
For each $k = 0,1,\dots,n$, define
\[
X_k := \{ [x_0:\cdots:x_n] \in \mathbb{P}^n \mid x_{0} = \cdots = x_{k-1} = 0 \} \cong \mathbb{P}^{n-k} ,
\]
the standard linear subspace of codimension $k$. 
Let $T \subset \mathrm{GL}_{n+1}$ be the diagonal torus acting naturally on $X$. 
The subvarieties $X_k$ are $T$-stable, and hence determine equivariant cohomology classes (see, e.g., Anderson and Fulton~\cite[Chapter~4.3]{MR4655919})
\[
\sigma_k := [X_k]_T \in H_T^*(\mathbb{P}^n).
\]
These classes form a basis of $H_T^*(\mathbb{P}^n)$ as a module over
\[
H_T^*(\mathrm{pt}) \cong \mathbb{Z}[t_1,\dots,t_{n+1}].
\]
The equivariant structure coefficients $C_{i,j}^k \in H_T^*(\mathrm{pt})$ are defined by
\[
\sigma_i \cdot \sigma_j = \sum_{k=0}^n C^k_{i,j} \, \sigma_k.
\]

By a result of Graham~\cite[Theorem~3.1]{MR1853356}, the structure constants are positive when expressed in the simple root variables $\beta_m := t_m - t_{m+1}$. 
Anderson and Fulton~\cite[p.~60]{MR4655919} asked for an explicit combinatorial formula for the coefficients $C^k_{i,j}$ that is both symmetric (i.e., $C^k_{i,j} = C^k_{j,i}$) and manifestly positive in the variables $\{\beta_m\}$. 
To the best of our knowledge, this paper provides the first solution to this question.\\

We now state our main result. Let $i,j,k$ be nonnegative integers and set $r := i+j-k$. Consider the bipartite graph $\mathcal{G}^i_j$ with vertex sets
\[
A = \{a_1,a_2,\ldots,a_i\}, \qquad 
B = \{b_1,b_2,\ldots,b_j\}.
\]
An \emph{$r$-matching} of $\mathcal{G}^i_j$ is a collection of $r$ pairwise vertex-disjoint edges between $A$ and $B$.

\begin{figure}[h]
\centering
\begin{tikzpicture}[scale=0.5, every node/.style={font=\small}]

\fill (0,0) circle (3pt) node[below] {$b_4$};
\fill (2,0) circle (3pt) node[below] {$b_3$};
\fill (4,0) circle (3pt) node[below] {$b_2$};
\fill (6,0) circle (3pt) node[below] {$b_1$};

\fill (1,3) circle (3pt) node[above] {$a_1$};
\fill (3,3) circle (3pt) node[above] {$a_2$};
\fill (5,3) circle (3pt) node[above] {$a_3$};

\draw[line width=1.5pt] (0,0) -- (5,3);
\draw[line width=1.5pt] (4,0) -- (1,3);

\end{tikzpicture}
\caption{A $2$-matching of the bipartite graph $\mathcal{G}^3_4$}
\label{fig:ex}
\end{figure}

For an edge $e = a_g b_h$ in an $r$-matching $M$, define its \emph{crossing value} by
\begin{equation}
\label{eq-cross}
X(e) \coloneqq \#\{ a_p b_q \in M \mid p < g,\; q < h \}.
\end{equation}
Define the \emph{weight} of an edge $e = a_g b_h$ by
\begin{equation}
\label{eq-wt-edge}
\operatorname{wt}(e) \coloneqq g + h - X(e) - 1,
\end{equation}
and extend this multiplicatively to an $r$-matching $M$ by
\begin{equation}
\label{eq-wt-matching}
\operatorname{wt}(M) \coloneqq \prod_{e \in M} \beta_{\operatorname{wt}(e)}.
\end{equation}

\begin{example}
\label{ex:computation}
In Figure~\ref{fig:ex}, we have $X(a_1 b_2)=0$ and $X(a_3 b_4)=1$. Hence,
\[
\begin{aligned}
\operatorname{wt}(a_1 b_2) &= 1 + 2 - 0 - 1 = 2, \\
\operatorname{wt}(a_3 b_4) &= 3 + 4 - 1 - 1 = 5.
\end{aligned}
\]
Therefore, the weight of the $2$-matching shown in Figure~\ref{fig:ex} is $\operatorname{wt}(M) = \beta_2 \beta_5$.
\end{example}

\begin{theorem}
\label{thm:main}
We have
\[
C^k_{i,j} = \sum_{M} \operatorname{wt}(M),
\]
where the sum runs over all $r$-matchings of $\mathcal{G}^i_j$.
\end{theorem}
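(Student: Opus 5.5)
We prove Theorem~\ref{thm:main} by checking that both sides satisfy the same recursion with the same initial values. On the geometric side we use the standard presentation of $H_T^*(\mathbb{P}^n)$ (Anderson--Fulton, Chapter~4). Let $\zeta$ be the equivariant hyperplane class, with signs normalized so that
\[
\sigma_k=\prod_{m=1}^{k}(\zeta-t_m).
\]
The two facts we need are then $\sigma_k=\sigma_{k-1}(\zeta-t_k)$ and the Chevalley-type relation $(\zeta-t_a)\sigma_j=\sigma_{j+1}+(t_a-t_{j+1})\sigma_j$. Both follow by writing $\zeta-t_a=(\zeta-t_{j+1})+(t_{j+1}-t_a)$ up to the sign convention. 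We fix the convention so that $t_a-t_{j+1}=\beta_a+\cdots+\beta_j$ for $a\le j$, and we check it against Graham positivity in the case $i=j=1$.

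\emph{Step 1 (recursion for $C^k_{i,j}$).} Assume $1\le i\le j$. Then
\[
\sigma_i\sigma_j=\sigma_{i-1}\,(\zeta-t_i)\sigma_j=\sigma_{i-1}\sigma_{j+1}+(\beta_i+\cdots+\beta_j)\,\sigma_{i-1}\sigma_j .
\]
Taking coefficients gives
\[
C^k_{i,j}=C^k_{i-1,j+1}+(\beta_i+\cdots+\beta_j)\,C^k_{i-1,j},
\]
with initial values $C^k_{0,j}=\delta_{jk}$. For $i>j$ we use commutativity of the cup product. Since the matching side is manifestly symmetric under the reflection $a\leftrightarrow b$ (crossing values are symmetric in the two sides), it is enough to treat $i\le j$. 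The case $j=n$ is also harmless: $\sigma_{n+1}=0$ in $H_T^*(\mathbb{P}^n)$, and on the combinatorial side we simply never use $k>n$.

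\emph{Step 2 (same recursion for matchings).} Write $F^k_{i,j}=\sum_M\operatorname{wt}(M)$. The initial values $F^k_{0,j}=\delta_{jk}$ hold, since when $i=0$ the only matching is the empty $0$-matching, which occurs exactly when $r=0$, i.e.\ $k=j$. For the recursion we classify matchings $M$ of $\mathcal{G}^i_j$ by the fate of the last top vertex $a_i$.

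If $a_i$ is matched to $b_h$, then deleting this edge gives an $(r-1)$-matching $M'$ of $\mathcal{G}^{i-1}_j$, and the crossing values of the other edges do not change, because $a_i$ is the largest index on the $A$ side. In the removed edge, $X(a_ib_h)$ is the number of matched $b_q$ with $q<h$, so $h-X(a_ib_h)$ runs bijectively over $1,\dots,j-r+1$ as $b_h$ ranges over the unmatched vertices of $M'$. Hence this part of the sum is $(\beta_i+\cdots+\beta_{i+j-r})F^k_{i-1,j}$.

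What remains is to show that the unmatched-$a_i$ matchings, together with the surplus $(\beta_{i+j-r+1}+\cdots+\beta_j)F^k_{i-1,j}$, add up to $F^k_{i-1,j+1}$. For this we classify $(r)$-matchings $N$ of $\mathcal{G}^{i-1}_{j+1}$ by whether the new vertex $b_{j+1}$ is matched. If it is unmatched, $N$ is exactly an unmatched-$a_i$ matching of $\mathcal{G}^i_j$, with the same weight. If $b_{j+1}$ is matched to $a_g$, then the edge has weight $\beta_{g+j-X}$, and $X$ now counts matched $a_p$ with $p<g$. We therefore need a weight-preserving bijection, or a second sign-free counting argument, showing
\[
\sum_{N:\,b_{j+1}\text{ matched}}\operatorname{wt}(N)=(\beta_{i+j-r+1}+\cdots+\beta_j)\,F^k_{i-1,j}.
\]

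\emph{Main obstacle.} This last identity is where the real work lies. Removing the edge $a_gb_{j+1}$ yields an $(r-1)$-matching of $\mathcal{G}^{i-1}_j$. As $a_g$ ranges over the unmatched top vertices, of which there are $(i-1)-(r-1)=i-r$, the index $g-X$ runs over $1,\dots,i-r$. So the removed edge contributes $\beta_{j+1},\dots,\beta_{j+i-r}$, which is off by a shift from the required $\beta_{i+j-r+1},\dots,\beta_j$ (note $i\le j$ makes the two ranges have different lengths in general). Consequently the bijection cannot be naive, and I expect it to need an exchange or sliding move: reroute the new edge and re-pair the matching so that crossing numbers of the other edges shift compensatingly. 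If no clean bijection appears, the fallback is to replace the recursion in $i$ by the recursion obtained by peeling off the factor $(\zeta-t_1)$ rather than $(\zeta-t_i)$. That corresponds to removing $a_1$, whose edge has crossing value $0$, and it may match the combinatorics directly. Failing that, we would prove the identity by a secondary induction on $r$, using the same first-vertex decomposition.
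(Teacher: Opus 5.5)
As written, your proposal is not a complete proof, because it stops at the identity you call the main obstacle. That obstacle is not real. It comes from subtracting in the wrong order.

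Since $r\le i$, the top index of your matched-$a_i$ range is $i+j-r=k\ge j$. So the matched-$a_i$ part $(\beta_i+\cdots+\beta_k)F^k_{i-1,j}$ \emph{exceeds} the recursion term $(\beta_i+\cdots+\beta_j)F^k_{i-1,j}$, by $(\beta_{j+1}+\cdots+\beta_k)F^k_{i-1,j}$. Your range $\beta_{i+j-r+1},\dots,\beta_j$ is empty, or, read as the telescoping difference $t_{k+1}-t_{j+1}$, exactly the negative of this.

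Write $U=F^{k-1}_{i-1,j}$ for the sum over $r$-matchings of $\mathcal{G}^{i-1}_j$. These are both the unmatched-$a_i$ part of $F^k_{i,j}$ and the $b_{j+1}$-unmatched part of $F^k_{i-1,j+1}$. What you need is
\[
F^k_{i,j}=(\beta_i+\cdots+\beta_k)\,F^k_{i-1,j}+U,
\qquad
F^k_{i-1,j+1}=U+(\beta_{j+1}+\cdots+\beta_k)\,F^k_{i-1,j},
\]
whose difference is $(\beta_i+\cdots+\beta_j)F^k_{i-1,j}$, as required. The second identity is precisely what your own computation shows. Deleting $a_gb_{j+1}$ changes no other crossing value, and the deleted edge has weight $j+(g-X)$, running bijectively over $j+1,\dots,j+i-r=k$. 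So the naive bijection works, and no exchange move or secondary induction is needed. For a check, take $i=j=2$, $k=3$: the $b_3$-matched part of $F^3_{1,3}$ is $\beta_3=\beta_3\cdot F^3_{1,2}$, while your target range is empty.

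With this fixed your argument is correct, and in fact your first computation alone suffices. Since $\beta_i+\cdots+\beta_k=t_i-t_{k+1}$, the matched/unmatched-$a_i$ split reads $F^k_{i,j}=(t_i-t_{k+1})F^k_{i-1,j}+F^{k-1}_{i-1,j}$. This is exactly the recursion of Lemma~\ref{lemma-recurrence}. Geometrically, it comes from applying your Chevalley relation to each $\sigma_k$ in the expansion of $\sigma_{i-1}\sigma_j$, rather than to $\sigma_j$. It needs no detour through $\sigma_{i-1}\sigma_{j+1}$, no reduction to $i\le j$, and no boundary care at $j=n$.

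The paper uses this same recursion differently. It first derives the closed formula $\sum_P\prod_s(t_{p_s}-t_{p_s+j+1-s})$ (Lemma~\ref{lemma-explicit-formula}). It then matches each product with the matchings of fixed $A$-support $P$, using the ``rank among unused $b$'s'' argument that you apply one edge at a time (Lemma~\ref{lemma-rule}). That closed formula and the fixed-$P$ factorization are reused later for the Newton polytope and the puzzle and tableau comparisons. Your recursive route is shorter for the theorem itself.

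If you keep your version, make two small repairs. First, with $\sigma_k=\prod_{m\le k}(\zeta-t_m)$ the relation is $(\zeta-t_a)\sigma_j=\sigma_{j+1}+(t_{j+1}-t_a)\sigma_j$. To get $t_a-t_{j+1}=\beta_a+\cdots+\beta_j$ you need the paper's normalization $\sigma_k=\prod_{m\le k}(\zeta+t_m)$. Second, at $j=n$ you should state that $F^k_{i-1,n+1}=0$ for $k\le n$, because then $r=i+n-k>i-1$.
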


This rule is manifestly positive in the simple root variables $\{\beta_m\}$. Moreover, the symmetry $C_{i,j}^k = C_{j,i}^k$ follows from the invariance of the weight of any $r$-matching under a $180^\circ$ rotation of the bipartite graph $\mathcal{G}^i_j$ to $\mathcal{G}^j_i$.

\begin{example}
For $i=2$, $j=3$, and $k=3$, we have $r = i + j - k = 2$. We enumerate all $2$-matchings of the bipartite graph $\mathcal{G}^2_3$ and compute their weights (see Figure~\ref{fig:full-ex}). Summing over all such matchings, we obtain
\[
C_{2,3}^3
=
\beta_3^2
+ 2\beta_3\beta_2
+ \beta_3\beta_1
+ \beta_2^2
+ \beta_2\beta_1.
\]
\end{example}

\begin{figure}[h]
\centering
\begin{tikzpicture}[scale=0.3]

\fill (0,0) circle (5pt) node[below] {\scriptsize $b_3$};
\fill (2,0) circle (5pt) node[below] {\scriptsize $b_2$};
\fill (4,0) circle (5pt) node[below] {\scriptsize $b_1$};
\fill (1,3) circle (5pt) node[above] {\scriptsize $a_1$};
\fill (3,3) circle (5pt) node[above] {\scriptsize $a_2$};

\fill (8,0) circle (5pt) node[below] {\scriptsize $b_3$};
\fill (10,0) circle (5pt) node[below] {\scriptsize $b_2$};
\fill (12,0) circle (5pt) node[below] {\scriptsize $b_1$};
\fill (9,3) circle (5pt) node[above] {\scriptsize $a_1$};
\fill (11,3) circle (5pt) node[above] {\scriptsize $a_2$};

\fill (16,0) circle (5pt) node[below] {\scriptsize $b_3$};
\fill (18,0) circle (5pt) node[below] {\scriptsize $b_2$};
\fill (20,0) circle (5pt) node[below] {\scriptsize $b_1$};
\fill (17,3) circle (5pt) node[above] {\scriptsize $a_1$};
\fill (19,3) circle (5pt) node[above] {\scriptsize $a_2$};

\fill (24,0) circle (5pt) node[below] {\scriptsize $b_3$};
\fill (26,0) circle (5pt) node[below] {\scriptsize $b_2$};
\fill (28,0) circle (5pt) node[below] {\scriptsize $b_1$};
\fill (25,3) circle (5pt) node[above] {\scriptsize $a_1$};
\fill (27,3) circle (5pt) node[above] {\scriptsize $a_2$};

\fill (32,0) circle (5pt) node[below] {\scriptsize $b_3$};
\fill (34,0) circle (5pt) node[below] {\scriptsize $b_2$};
\fill (36,0) circle (5pt) node[below] {\scriptsize $b_1$};
\fill (33,3) circle (5pt) node[above] {\scriptsize $a_1$};
\fill (35,3) circle (5pt) node[above] {\scriptsize $a_2$};

\fill (40,0) circle (5pt) node[below] {\scriptsize $b_3$};
\fill (42,0) circle (5pt) node[below] {\scriptsize $b_2$};
\fill (44,0) circle (5pt) node[below] {\scriptsize $b_1$};
\fill (41,3) circle (5pt) node[above] {\scriptsize $a_1$};
\fill (43,3) circle (5pt) node[above] {\scriptsize $a_2$};

\draw[line width=1pt] (0,0) -- (1,3);
\draw[line width=1pt] (2,0) -- (3,3);

\draw[line width=1pt] (10,0) -- (9,3);
\draw[line width=1pt] (8,0) -- (11,3);

\draw[line width=1pt] (16,0) -- (17,3);
\draw[line width=1pt] (20,0) -- (19,3);

\draw[line width=1pt] (24,0) -- (27,3);
\draw[line width=1pt] (28,0) -- (25,3);

\draw[line width=1pt] (34,0) -- (33,3);
\draw[line width=1pt] (36,0) -- (35,3);

\draw[line width=1pt] (42,0) -- (43,3);
\draw[line width=1pt] (44,0) -- (41,3);

\node at (2,-3) {\small $\beta_3 \beta_3$};
\node at (10,-3) {\small $\beta_2 \beta_3$};
\node at (18,-3) {\small $\beta_3 \beta_2$};
\node at (26,-3) {\small $\beta_1 \beta_3$};
\node at (34,-3) {\small $\beta_2 \beta_2$};
\node at (42,-3) {\small $\beta_1 \beta_2$};

\end{tikzpicture}
\caption{All $2$-matchings of $\mathcal{G}^2_3$ and their weights}
\label{fig:full-ex}
\end{figure}

The proof of Theorem~\ref{thm:main} is given in Section~\ref{sec:proof}. In Section~\ref{subsec:app}, we develop several consequences of our rule, including a description of the Newton polytope of $C_{i,j}^k$, the saturated Newton polytope (SNP) property, and a strengthening of the equivariant saturation phenomenon describing the behavior of supports under scaling. Finally, in Sections~\ref{subsec:puzzle} and~\ref{subsec:edge-labeled}, we compare our formulation with existing combinatorial models, namely the Knutson--Tao puzzles and the Thomas--Yong edge-labeled tableaux.

\section{Proof of the bipartite graph rule (Theorem~\ref{thm:main})}
\label{sec:proof}

We begin by recalling two standard results in equivariant Schubert calculus on Grassmannians, which we include for completeness. We then use these to recover the classical explicit formula for the coefficients \(C_{i,j}^k\), and show how our bipartite matching model naturally reproduces each term in this expansion.

\begin{lemma}
\label{lemma-pieri}
We have the equivariant Pieri rule for $\mathbb{P}^n$:
\[
\sigma_1 \cdot \sigma_k = \sigma_{k+1} + (t_1 - t_{k+1})\, \sigma_k.
\]
\end{lemma}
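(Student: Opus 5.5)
The plan is to compute the product by equivariant localization at the $T$-fixed points $p_0,\dots,p_n$ of $\mathbb{P}^n$, where $p_m$ is the coordinate point with only $x_m\neq 0$. I use the convention that the weight of the coordinate $x_m$ is $t_{m+1}$, so that $\sigma_1=[X_1]_T$ is the class of the divisor $\{x_0=0\}$. The restriction map $H_T^*(\mathbb{P}^n)\to\bigoplus_m H_T^*(p_m)$ is injective, since $\mathbb{P}^n$ is equivariantly formal with isolated fixed points. It therefore suffices to check the identity after restricting to every $p_m$.

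First I compute the restrictions of the basis classes. The subvariety $X_k$ is the complete intersection $\{x_0=\cdots=x_{k-1}=0\}$. Each $x_\ell$ is a $T$-equivariant section of $\mathcal{O}(1)$ twisted by a character, so it has an equivariant first Chern class. From this I get
\[
\sigma_k|_{p_m}=\prod_{\ell=1}^{k}(t_\ell - t_{m+1}) \quad\text{if } m\ge k, \qquad \sigma_k|_{p_m}=0 \quad\text{if } m<k.
\]
The second case holds because $p_m\notin X_k$ when $m<k$. For the first case, at $p_m$ with $m\ge k$ the normal directions to $X_k$ are $x_\ell/x_m$ for $\ell<k$, with weights $t_{\ell+1}-t_{m+1}$ up to a global sign convention. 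I will fix the sign so that $\sigma_1|_{p_m}=t_1-t_{m+1}$, which is consistent with $\sigma_1|_{p_0}=0$.

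The Pieri identity must then be verified at each $p_m$, and I split into three cases.
\begin{enumerate}[(a)]
\item If $m\ge k+1$, then $\sigma_{k+1}|_{p_m}=\sigma_k|_{p_m}\cdot(t_{k+1}-t_{m+1})$. The left side equals $(t_1-t_{m+1})\,\sigma_k|_{p_m}$. The right side equals $\bigl[(t_{k+1}-t_{m+1})+(t_1-t_{k+1})\bigr]\sigma_k|_{p_m}$, which is the same thing.
\item If $m=k$, then $\sigma_{k+1}|_{p_k}=0$. Both sides equal $(t_1-t_{k+1})\,\sigma_k|_{p_k}$.
\item If $m<k$, every term vanishes.
\end{enumerate}
The case $k=n$ is covered with $\sigma_{n+1}=0$. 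Injectivity of localization then gives the lemma.

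The main obstacle is the sign convention. I must make sure that the weights chosen for the coordinates agree with the convention under which Graham positivity holds in $\beta_m=t_m-t_{m+1}$. Here $t_1-t_{k+1}=\beta_1+\cdots+\beta_k$, which is the positive choice. If the opposite convention produces $t_{k+1}-t_1$, I will redefine the torus weights by inverting the characters, as in Anderson--Fulton's conventions.

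As a sanity check, I can also argue geometrically. Intersect $X_k$ with a generic translate of the hyperplane $\{x_0=0\}$, which gives $\sigma_{k+1}$. The correction term is $c_1^T$ of the line bundle $\mathcal{O}(1)$ restricted to $X_k$, twisted by the character of $x_0$, and this term is detected at the point $p_k$.
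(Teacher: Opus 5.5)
Your proof is correct, but it takes a different route from the paper. The paper never localizes. It quotes the Anderson--Fulton presentation $\sigma_k=\prod_{i=1}^k(\zeta+t_i)$, with $\zeta=c_1^T(\mathcal{O}(1))$, and finishes with the single identity $\zeta+t_1=(\zeta+t_{k+1})+(t_1-t_{k+1})$ in $H_T^*(\mathbb{P}^n)$. You instead restrict to the fixed points and check the identity in $\bigoplus_m H_T^*(p_m)$.

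In the Anderson--Fulton convention $\zeta|_{p_m}=-t_{m+1}$. So your formula $\sigma_k|_{p_m}=\prod_{\ell=1}^k(t_\ell-t_{m+1})$ is exactly the image of the paper's presentation, and your case (a) is literally the restriction of the paper's identity. The core computation is therefore the same; only the framework differs.

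What each route buys:
\begin{itemize}
\item The paper's route is shorter. It rests entirely on the cited presentation, and for $k=n$ it tacitly uses the relation $\prod_{i=1}^{n+1}(\zeta+t_i)=0$ to read $\sigma_{n+1}$ as $0$.
\item Your route costs the localization theorem and a normal-weight computation. In exchange, the vanishing cases ($m<k$, and $\sigma_{n+1}=0$) become geometrically obvious.
\item Your sign normalization via $\sigma_1|_{p_m}=t_1-t_{m+1}$ is legitimate, because inverting all characters flips every normal weight at once.
\end{itemize}

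Your first step already gives more than you use. Cutting out $X_k$ by the twisted sections $x_0,\dots,x_{k-1}$ yields the global formula $\sigma_k=\prod_\ell c_1^T(\mathcal{O}(1)\otimes\chi_\ell)$, which is the paper's presentation. From there the one-line algebra finishes without localization.

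Your ``sanity check'' is not an equivariant argument as stated, since a generic translate of $\{x_0=0\}$ is not $T$-stable. The correct geometric version uses the section $x_k$, not $x_0$ (which vanishes identically on $X_k$), to cut out $X_{k+1}\subset X_k$. The correction $t_1-t_{k+1}$ is then the difference of the two twisting characters, which is again the paper's computation. This does not affect your main proof.
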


\begin{proof}
By Anderson and Fulton~\cite[p.~59]{MR4655919}, we have
\[
\sigma_k = \prod_{i=1}^k (\zeta + t_i) \in H_T^*(\mathbb{P}^n),
\]
where $\zeta = c_1^T(\mathcal{O}(1))$ is the equivariant hyperplane class, and
$t_i = c_1^T(L_i)$ denotes the weight of the $T$-action on the $i$th coordinate line. Thus,
\[
\sigma_1 \cdot \sigma_k 
= (\zeta + t_1)\prod_{i=1}^k (\zeta + t_i).
\]
Adding and subtracting $t_{k+1}$, we rewrite
\[
\zeta + t_1 = (\zeta + t_{k+1}) + (t_1 - t_{k+1}),
\]
and hence
\[
\begin{aligned}
\sigma_1 \cdot \sigma_k 
&= \bigl[(\zeta + t_{k+1}) + (t_1 - t_{k+1})\bigr]
\prod_{i=1}^k (\zeta + t_i) \\
&= (\zeta + t_{k+1})\prod_{i=1}^k (\zeta + t_i)
+ (t_1 - t_{k+1}) \prod_{i=1}^k (\zeta + t_i) \\
&= \sigma_{k+1} + (t_1 - t_{k+1})\, \sigma_k,
\end{aligned}
\]
as claimed.
\end{proof}

\begin{lemma}
\label{lemma-recurrence}
We have the recursion
\[
C_{i+1,j}^{k} = (t_{i+1}-t_{k+1}) \, C_{i,j}^{k} + C_{i,j}^{k-1}.
\]
\end{lemma}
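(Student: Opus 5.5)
We will use the product formula from the proof of Lemma~\ref{lemma-pieri}, namely $\sigma_k=\prod_{m=1}^k(\zeta+t_m)$. The plan is to derive a Pieri-type identity for multiplication by $(\zeta+t_{i+1})$, analogous to Lemma~\ref{lemma-pieri}, and then compare coefficients in the basis $\{\sigma_k\}$.

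First we write $\sigma_{i+1}=(\zeta+t_{i+1})\,\sigma_i$, so that
\[
\sigma_{i+1}\cdot\sigma_j=(\zeta+t_{i+1})\sum_{k}C^k_{i,j}\,\sigma_k .
\]
For each $k$ we then mimic the argument of Lemma~\ref{lemma-pieri} with $t_{i+1}$ in place of $t_1$. We decompose $\zeta+t_{i+1}=(\zeta+t_{k+1})+(t_{i+1}-t_{k+1})$, which gives
\[
(\zeta+t_{i+1})\,\sigma_k=\sigma_{k+1}+(t_{i+1}-t_{k+1})\,\sigma_k .
\]
Substituting this into the sum and reindexing the $\sigma_{k+1}$ terms by $k\mapsto k-1$ yields
\[
\sigma_{i+1}\cdot\sigma_j=\sum_k\bigl[(t_{i+1}-t_{k+1})C^k_{i,j}+C^{k-1}_{i,j}\bigr]\sigma_k .
\]
Since the $\sigma_k$ form an $H_T^*(\mathrm{pt})$-basis, comparing coefficients gives the recursion.

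The one delicate point is the top degree $k=n$. There $\sigma_{n+1}=\prod_{m=1}^{n+1}(\zeta+t_m)$ must vanish in $H_T^*(\mathbb{P}^n)$. This holds by the standard presentation $H_T^*(\mathbb{P}^n)=\mathbb{Z}[t][\zeta]/\bigl(\prod_{m}(\zeta+t_m)\bigr)$, which we take from Anderson--Fulton alongside the product formula. The term $t_{n+2}$ never appears with a nonzero coefficient, because it multiplies $\sigma_{n+1}=0$. We therefore need the convention that $C^{k}_{i,j}=0$ for $k$ outside $0,\dots,n$ and $C^{-1}_{i,j}=0$; with it, the recursion holds for all $k\in\{0,\dots,n\}$.

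The main thing to verify is that this boundary handling is consistent. Everything else is the same telescoping computation as in Lemma~\ref{lemma-pieri}.
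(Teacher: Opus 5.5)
Your proof is correct and is essentially the paper's argument. The paper multiplies by $\sigma_1=\zeta+t_1$, applies the Pieri rule to both $\sigma_1\sigma_i$ and $\sigma_1\sigma_k$, and compares the two sides via associativity; once the $t_1$ terms cancel, this is exactly your direct multiplication of $\sum_k C^k_{i,j}\sigma_k$ by $\zeta+t_{i+1}$. Your remark that $\sigma_{n+1}=\prod_{m=1}^{n+1}(\zeta+t_m)=0$ is needed at $k=n$ is a boundary point the paper uses silently, already in Lemma~\ref{lemma-pieri}, so making it explicit is a small improvement.
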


\begin{proof}
Using the equivariant Pieri rule (Lemma~\ref{lemma-pieri}), we compute
\[
\begin{aligned}
\sigma_1 \cdot (\sigma_i \cdot \sigma_j) 
&= \sigma_1 \cdot \left( \sum_{k} C_{i,j}^{k} \, \sigma_k \right) \\
&= \sum_{k} C_{i,j}^k \, (\sigma_1 \cdot \sigma_k) \\
&= \sum_{k} C_{i,j}^k \, \bigl[ \sigma_{k+1} + (t_1-t_{k+1}) \, \sigma_k \bigr] \\
&= \sum_{k} \left[ C_{i,j}^{k-1} + (t_1 - t_{k+1}) \, C_{i,j}^{k} \right] \, \sigma_k.
\end{aligned}
\]
On the other hand,
\[
\begin{aligned}
(\sigma_1 \cdot \sigma_i ) \cdot \sigma_j 
&= \bigl[ \sigma_{i+1} + (t_1 - t_{i+1}) \, \sigma_i \bigr] \cdot \sigma_j \\
&= \sigma_{i+1}\cdot \sigma_j + (t_1-t_{i+1})(\sigma_i\cdot \sigma_j) \\
&= \sum_k C_{i+1,j}^k\,\sigma_k + (t_1-t_{i+1})\sum_k C_{i,j}^k\,\sigma_k \\
&= \sum_k \left[ C_{i+1,j}^k + (t_1-t_{i+1}) \, C_{i,j}^k \right]\sigma_k.
\end{aligned}
\]
By associativity,
\[
\sigma_1 \cdot (\sigma_i \cdot \sigma_j)= (\sigma_1 \cdot \sigma_i ) \cdot \sigma_j.
\]
Comparing coefficients of $\sigma_k$ gives
\[
C_{i,j}^{k-1} + (t_1-t_{k+1}) \, C_{i,j}^k
=
C_{i+1,j}^k + (t_1-t_{i+1}) \, C_{i,j}^k.
\]
Rearranging yields
\[
C_{i+1,j}^{k}
=
(t_{i+1}-t_{k+1}) \, C_{i,j}^{k}
+ C_{i,j}^{k-1},
\]
as claimed.
\end{proof}

\begin{lemma}[{\cite[Exercise 4.7.5]{MR4655919}}]
\label{lemma-explicit-formula}
We have
\[
\sigma_i\cdot \sigma_j
=
\sigma_{i+j}
+
\sum_{j\le k<i+j} C_{i,j}^k\,\sigma_k,
\]
where
\[
C_{i,j}^k=
\sum_{1\le p_1<\cdots<p_r\le i}
\prod_{s=1}^r \bigl(t_{p_s}-t_{p_s+j+1-s}\bigr),
\]
and $r=i+j-k$.
\end{lemma}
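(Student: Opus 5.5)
We prove Lemma~\ref{lemma-explicit-formula} by induction on $i$, using the recursion of Lemma~\ref{lemma-recurrence} with $j$ held fixed. Define, for all integers $i\ge 0$ and $k$, the candidate
\[
D_{i,j}^k := \sum_{1\le p_1<\cdots<p_r\le i}\prod_{s=1}^r \bigl(t_{p_s}-t_{p_s+j+1-s}\bigr), \qquad r=i+j-k,
\]
with the conventions that $D_{i,j}^k=1$ when $r=0$ (empty product, empty sequence) and $D_{i,j}^k=0$ when $r<0$ or $r>i$. The claim is $C_{i,j}^k=D_{i,j}^k$ for all $k$. This also gives the shape of the expansion: it forces $C^{i+j}_{i,j}=1$, and $C^k_{i,j}=0$ for $k>i+j$ or $k<j$.

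\textbf{Base case $i=0$.} Here $\sigma_0=[\mathbb{P}^n]_T=1$, so $\sigma_0\cdot\sigma_j=\sigma_j$. Thus $C_{0,j}^k=\delta_{jk}$, which matches $D_{0,j}^k$: only $r=0$ survives, since $r$ must lie in $[0,i]=\{0\}$.

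\textbf{Inductive step.} Assuming $C_{i,j}^{\bullet}=D_{i,j}^{\bullet}$, Lemma~\ref{lemma-recurrence} gives
\[
C_{i+1,j}^{k}=(t_{i+1}-t_{k+1})\,D_{i,j}^{k}+D_{i,j}^{k-1},
\]
so it suffices to verify the same recursion for $D$. Fix $k$ and put $r=i+1+j-k$. Split the sequences $1\le p_1<\cdots<p_r\le i+1$ in $D_{i+1,j}^k$ according to whether $p_r\le i$ or $p_r=i+1$.
\begin{itemize}
\item If $p_r\le i$, the sequences are exactly those indexing $D_{i,j}^{k-1}$, because $i+j-(k-1)=r$. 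The factors agree, since $t_{p_s+j+1-s}$ depends only on $s$ and $p_s$, not on $i$.
\item If $p_r=i+1$, the last factor is $t_{i+1}-t_{i+1+j+1-r}=t_{i+1}-t_{k+1}$, using $i+j+2-r=k+1$. The remaining $p_1<\cdots<p_{r-1}\le i$ index $D_{i,j}^k$, because $i+j-k=r-1$.
\end{itemize}
Adding the two parts gives exactly the recursion.

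\textbf{Main obstacle.} The only delicate point is the boundary cases, where the conventions must agree with Lemma~\ref{lemma-recurrence} when $r=0$ or $r=i+1$. For $r=0$, the terms $D_{i,j}^{k-1}$ and $D_{i,j}^k$ vanish or reduce to $1$ consistently. For $r=i+1$, the first part is empty ($D_{i,j}^{k-1}=0$ because $r>i$), and the second part reduces to the full sequence $1,\dots,i+1$.

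We also need the recursion for every $k$, including those where $C^k$ is a priori zero, such as $k>n$. There we interpret $\sigma_k=0$ for $k>n$; the identity still holds coefficientwise for $k\le n$, which is all that is asserted. Finally, reading off $r=0$ gives the leading term $\sigma_{i+j}$ with coefficient $1$, and the range $j\le k<i+j$ corresponds to $1\le r\le i$, completing the proof.
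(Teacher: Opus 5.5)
Your proposal is correct and follows essentially the same route as the paper: induction on $i$ via the recursion of Lemma~\ref{lemma-recurrence}, splitting the index sequences according to whether the largest index equals $i+1$ (which produces the factor $t_{i+1}-t_{k+1}$, via $i+j+2-r=k+1$) or is at most $i$ (which recovers $C_{i,j}^{k-1}$). The only differences are minor. You start from the trivial base case $i=0$ rather than the Pieri case $i=1$. You also make the boundary conventions explicit ($r=0$, $r<0$, $r>i$, and $\sigma_k=0$ for $k>n$), which the paper leaves implicit.
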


\begin{proof}
We argue by induction on $i$, using the recurrence from Lemma~\ref{lemma-recurrence}:
\[
C_{i+1,j}^{k}=(t_{i+1}-t_{k+1})\,C_{i,j}^{k}+C_{i,j}^{k-1}.
\]
For the base case $i=1$, Lemma~\ref{lemma-pieri} gives
\[
\sigma_1\cdot \sigma_j
=
\sigma_{j+1}+(t_1-t_{j+1})\sigma_j.
\]
Hence
\[
C_{1,j}^{j+1}=1,\qquad C_{1,j}^{j}=t_1-t_{j+1},
\]
and all other coefficients vanish. This agrees with the stated formula since
\[
C_{1,j}^{j}
= t_1-t_{j+1}
=
\sum_{1\le p_1\le 1}(t_{p_1}-t_{p_1+j}).
\]
Now assume the formula holds for $C_{i,j}^k$ and $C_{i,j}^{k-1}$, and set $r=i+j-k$. By the induction hypothesis, we have
\[
C_{i,j}^k=
\sum_{1\le p_1<\cdots<p_r\le i}
\prod_{s=1}^r \bigl(t_{p_s}-t_{p_s+j+1-s}\bigr),
\]
and
\[
C_{i,j}^{k-1}
=
\sum_{1\le p_1<\cdots<p_{r+1}\le i}
\prod_{s=1}^{r+1}\bigl(t_{p_s}-t_{p_s+j+1-s}\bigr).
\]
Substituting into the recurrence, we obtain
\[
\begin{aligned}
C_{i+1,j}^{k}
&=(t_{i+1}-t_{k+1})
\sum_{1\le p_1<\cdots<p_r\le i}
\prod_{s=1}^r \bigl(t_{p_s}-t_{p_s+j+1-s}\bigr) \\
&\quad+
\sum_{1\le p_1<\cdots<p_{r+1}\le i}
\prod_{s=1}^{r+1}\bigl(t_{p_s}-t_{p_s+j+1-s}\bigr).
\end{aligned}
\]
We interpret the first term by adjoining $p_{r+1}=i+1$. Observe that
\[
t_{p_{r+1}} - t_{p_{r+1}+j+1-(r+1)}
=
t_{i+1}-t_{(i+1)+j+1-(r+1)}
=
t_{i+1}-t_{k+1}.
\]
Hence the first sum may be rewritten as
\[
\sum_{\substack{1\le p_1<\cdots<p_{r+1} = i+1}}
\prod_{s=1}^{r+1} \bigl(t_{p_s}-t_{p_s+j+1-s}\bigr).
\]
The second sum corresponds to those tuples with $p_{r+1}\le i$, i.e.,
\[
\sum_{\substack{1\le p_1<\cdots<p_{r+1} < i+1}}
\prod_{s=1}^{r+1}\bigl(t_{p_s}-t_{p_s+j+1-s}\bigr).
\]
Combining these two disjoint cases, we obtain
\[
C_{i+1,j}^{k}
=
\sum_{1\le p_1<\cdots<p_{r+1}\le i+1}
\prod_{s=1}^{r+1} \bigl(t_{p_s}-t_{p_s+j+1-s}\bigr),
\]
which is the desired formula for $C_{i+1,j}^k$. This completes the induction.
\end{proof}

Fix $P=\{p_1,\dots,p_r\}\subseteq [i]\coloneqq \{1,2,\dots,i\}$ with $p_1<\cdots<p_r$. 
Let $\mathcal{G}^i_j(P)$ denote the set of all $r$-matchings of $\mathcal{G}^i_j$ of the form $M = \{a_{p_1}b_{q_1}, \dots, a_{p_r}b_{q_r}\}$ where $(q_1, \dots, q_r)$ is a tuple of pairwise distinct elements of $[j]$. We now relate the explicit formula of Lemma~\ref{lemma-explicit-formula} to our bipartite matching model.

\begin{lemma}
\label{lemma-rule}
We have
\[
\sum_{M \in \mathcal{G}^i_j(P)} \operatorname{wt}(M) 
= 
\prod_{s=1}^r \bigl(t_{p_s}-t_{p_s+j+1-s}\bigr).
\]
\end{lemma}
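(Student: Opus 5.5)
Since $t_a - t_b = \beta_a + \beta_{a+1} + \cdots + \beta_{b-1}$ for $a<b$, the right-hand side expands as
\[
\prod_{s=1}^r \bigl(t_{p_s}-t_{p_s+j+1-s}\bigr)=\prod_{s=1}^r \sum_{m=p_s}^{p_s+j-s}\beta_m .
\]
Each factor is a sum of exactly $j+1-s$ consecutive simple roots. Expanding, the right-hand side is a sum over tuples $(m_1,\dots,m_r)$ with $m_s\in\{p_s,\dots,p_s+j-s\}$ of $\beta_{m_1}\cdots\beta_{m_r}$. There are $j(j-1)\cdots(j-r+1)$ such tuples, which is exactly $|\mathcal{G}^i_j(P)|$, the number of injective tuples $(q_1,\dots,q_r)$ in $[j]$. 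So the plan is to construct an explicit bijection $\Phi\colon \mathcal{G}^i_j(P)\to \prod_s\{p_s,\dots,p_s+j-s\}$ with $\operatorname{wt}(a_{p_s}b_{q_s})=m_s$ for each $s$. Comparing monomials term by term then gives the lemma.

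The key computation is the weight of the edge $e_s=a_{p_s}b_{q_s}$. Because $p_1<\cdots<p_r$, the condition $p<g$ in the definition of $X(e_s)$ is the condition that the other edge is $a_{p_u}b_{q_u}$ with $u<s$. Hence
\[
X(e_s)=\#\{u<s : q_u<q_s\},\qquad \operatorname{wt}(e_s)=p_s+\bigl(q_s-1-\#\{u<s: q_u<q_s\}\bigr).
\]
Set $c_s:=q_s-1-\#\{u<s:q_u<q_s\}$. This is the number of elements of $[j]\setminus\{q_1,\dots,q_{s-1}\}$ that are smaller than $q_s$, i.e.\ the rank (starting at $0$) of $q_s$ among the $j-s+1$ values still available at step $s$. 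So $c_s\in\{0,1,\dots,j-s\}$, and $\operatorname{wt}(e_s)=p_s+c_s$ ranges over $\{p_s,\dots,p_s+j-s\}$, matching the $s$th factor.

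It remains to check that $(q_1,\dots,q_r)\mapsto(c_1,\dots,c_r)$ is a bijection from injective $r$-tuples in $[j]$ onto $\prod_{s=1}^r\{0,\dots,j-s\}$. This is the standard Lehmer-code bijection. Given $c_1,\dots,c_r$, choose $q_1$ as the $(c_1+1)$-st smallest element of $[j]$, then $q_2$ as the $(c_2+1)$-st smallest remaining element, and so on. Each step is forced and always possible, so the map is invertible. Summing $\prod_s\beta_{p_s+c_s}$ over all $(c_s)$ then yields the product formula.

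The only delicate step is the crossing computation: one must check that $X(e_s)$ depends only on edges with smaller $A$-index and that it counts exactly the used values below $q_s$. Once that is settled, the rest is bookkeeping. Edge cases such as $r=0$, where the empty product equals $1$, and $r>j$, where both sides are $0$ or the set is empty, should be noted briefly.
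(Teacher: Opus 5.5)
Your proposal is correct and follows the paper's proof essentially step for step. You use the same telescoping expansion of $t_{p_s}-t_{p_s+j+1-s}$ into $\beta_{p_s}+\cdots+\beta_{p_s+j-s}$ and the same identification of $q_s - X(a_{p_s}b_{q_s})$ as the rank of $q_s$ among the unused elements of $[j]$. The paper only asserts that this rank ``varies independently''; you make that step explicit as the Lehmer-code bijection.
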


\begin{proof}
Let $M = \{a_{p_1}b_{q_1}, \dots , a_{p_r}b_{q_r}\}$. Since $p_1<\cdots<p_r$, by~\eqref{eq-cross} we have
\[
X(a_{p_s}b_{q_s}) = \#\{ h < s \mid q_h < q_s \}.
\]
Recall from~\eqref{eq-wt-edge} that
\[
\operatorname{wt}(a_{p_s}b_{q_s}) = p_s + q_s - X(a_{p_s}b_{q_s}) - 1.
\]
For fixed $s$, the quantity
\[
q_s - X(a_{p_s}b_{q_s})
\]
is the rank of $q_s$ among the elements of $[j]\setminus\{q_1,\dots,q_{s-1}\}$ when listed in increasing order. Therefore it ranges over
\[
\{1,2,\dots,j+1-s\}.
\]
It follows that $\operatorname{wt}(a_{p_s}b_{q_s})$ ranges over
\[
\{p_s, p_s+1, \dots, p_s + j - s\}.
\]
Moreover, for any fixed choice of $(q_1,\dots,q_{s-1})$, there is a unique choice of $q_s$ realizing each possible value of $\operatorname{wt}(a_{p_s}b_{q_s})$ in the given interval. Thus the quantity $\operatorname{wt}(a_{p_s}b_{q_s})$ varies independently over these ranges. Summing over all matchings with fixed $P$, we obtain
\[
\begin{aligned}
\sum_{M \in \mathcal{G}^i_j(P)} \operatorname{wt}(M) 
&= \sum_{(q_1,\dots,q_r)} 
\prod_{s=1}^r \beta_{\operatorname{wt}(a_{p_s}b_{q_s})} \\
&= \prod_{s=1}^{r} \left( \sum_{m=p_s}^{p_s+j-s} \beta_m \right),
\end{aligned}
\]
where the sum runs over all tuples $(q_1,\dots,q_r)$ of distinct elements of $[j]$. Finally, using $\beta_m = t_m - t_{m+1}$, we have
\[
\sum_{m=p_s}^{p_s+j-s} \beta_m = t_{p_s} - t_{p_s+j+1-s}.
\]
This completes the proof.
\end{proof}

\begin{proof}[Proof of Theorem~\ref{thm:main}]
By Lemma~\ref{lemma-rule}, for each fixed subset $P=\{p_1,\dots,p_r\}$ we have
\[
\sum_{M \in \mathcal{G}^i_j(P)} \operatorname{wt}(M) 
= 
\prod_{s=1}^r \bigl(t_{p_s}-t_{p_s+j+1-s}\bigr).
\]
Summing over all choices of $P$ (equivalently, over all $r$-matchings of $\mathcal{G}^i_j$), we obtain
\[
\sum_{M} \operatorname{wt}(M)
= \sum_{1\le p_1<\cdots<p_r\le i}
\prod_{s=1}^r \bigl(t_{p_s}-t_{p_s+j+1-s}\bigr).
\]
This coincides with the formula for $C_{i,j}^k$ in Lemma~\ref{lemma-explicit-formula}, completing the proof.
\end{proof}

\section{Applications and remarks}

We may identify projective space with the Grassmannian $\mathbb{P}^n \cong \mathrm{Gr}(1,\mathbb{C}^{n+1})$, and thus view our results in this special case of equivariant Schubert calculus on Grassmannians. We first develop several structural consequences of our combinatorial rule, including the saturated Newton polytope property, a refinement of the equivariant saturation property, and a monomial-positivity property (Section~\ref{subsec:app}).\\ 

We then compare our formulation with existing combinatorial models, such as the puzzle rule of Knutson--Tao~\cite{MR1997946} and the edge-labeled Young tableaux rule of Thomas--Yong~\cite{MR3795480}. While these models apply in the more general Grassmannian setting, in the case of projective space our formulation makes the symmetry $C_{i,j}^k = C_{j,i}^k$ transparent. We also describe correspondences between these models and our formulation (Sections~\ref{subsec:puzzle} and~\ref{subsec:edge-labeled}).

\subsection{Applications}
\label{subsec:app}

Let $\operatorname{Supp}(C_{i,j}^k)$ denote the set of exponent vectors of monomials appearing in $C_{i,j}^k$, viewed as a polynomial in $\beta_1,\dots,\beta_n$. Recall that a polynomial has \emph{saturated Newton polytope} (SNP) if every lattice point in the convex hull of its exponent vectors corresponds to a monomial~\cite{MR4021852}. Robichaux--Yadav--Yong~\cite[Conjecture~6.11]{MR4381918} conjectured that the structure constants $C_{\lambda,\mu}^{\nu}$ in the equivariant Schubert calculus of the Grassmannian have SNP. In our setting, we confirm this conjecture. In fact, our combinatorial rule allows us to describe the Newton polytope of $C_{i,j}^k$ explicitly.\\

Let $r := i+j-k$. By the explicit formula of Lemma~\ref{lemma-explicit-formula}, we have
\begin{equation}
\label{eq-nonzero}
C_{i,j}^k \neq 0
\quad \Longleftrightarrow \quad
\max\{i,j\} \le k \le \min\{i+j,n\}.
\end{equation}
In particular, in this case $0 \le r \le \min\{i,j\}$. We define the following polytope:
\[
Q_{i,j}^k
:=
\left\{
(x_1,\dots,x_n)\in \mathbb{R}_{\ge 0}^n
\ \middle|\ 
\sum_{m=1}^n x_m = r,\quad
\sum_{m=1}^{\ell} x_m \le \ell \text{ for } 1 \le \ell \le i,\quad
x_m = 0 \text{ for } m > k
\right\}.
\]

The defining system of inequalities has a totally unimodular constraint matrix with integer right-hand side; in particular, $Q_{i,j}^k$ is an integral polytope (see \cite[p.~49]{MR85148}). We will use this fact in the proof of the following theorem.

\begin{theorem}
\label{thm-supp}
Assume that $\max\{i,j\} \le k \le \min\{i+j,n\}$. Then
\[
\operatorname{Supp}(C_{i,j}^k)=Q_{i,j}^k\cap \mathbb{Z}^n,
\]
and hence \(C_{i,j}^{k}\) has SNP.
\end{theorem}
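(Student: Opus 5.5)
Both inclusions can be read off from the product formula in the proof of Lemma~\ref{lemma-rule}. Summing over $P$,
\[
C_{i,j}^k=\sum_{1\le p_1<\cdots<p_r\le i}\ \prod_{s=1}^r\Bigl(\sum_{m=p_s}^{p_s+j-s}\beta_m\Bigr).
\]
All coefficients here are nonnegative, so there is no cancellation. Hence the support is exactly the set of exponent vectors $x=\sum_{s=1}^r e_{m_s}$, where $1\le p_1<\cdots<p_r\le i$ and $p_s\le m_s\le p_s+j-s$ for each $s$. Call such an $x$ \emph{realizable}. The theorem then reduces to showing that the realizable vectors are exactly the lattice points of $Q_{i,j}^k$.

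For $\operatorname{Supp}\subseteq Q$, take a realizable $x$. Clearly $x\ge 0$ and $\sum_m x_m=r$. The largest index used is $m_r\le p_r+j-r\le i+j-r=k$, so $x_m=0$ for $m>k$. For $\ell\le i$, every $m_s\le \ell$ forces $p_s\le m_s\le \ell$. Since the $p_s$ are distinct, at most $\ell$ indices $s$ qualify, and so $\sum_{m\le\ell}x_m\le \ell$. This direction is routine.

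For $Q\cap\mathbb{Z}^n\subseteq\operatorname{Supp}$, take an integral $x\in Q$ and list its multiset of indices as $m_1\le m_2\le\cdots\le m_r$, with repetitions according to $x$. I will choose $p_s$ greedily: $p_1=\max(1,m_1-j+1)$ and $p_s=\max(p_{s-1}+1,\,m_s-j+s)$. Equivalently, $p_s$ is the smallest value exceeding $p_{s-1}$ with $p_s\ge m_s-(j-s)$. Three conditions need checking.
\begin{enumerate}[(i)]
\item $p_s\le m_s$. By the closed form $p_s=\max_{t\le s}\bigl(m_t-j+t,\ (s-t)+\cdots\bigr)$, this reduces to showing that the number of $m_t$ that are $\le m_s$ is at most $m_s$. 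That is exactly the prefix inequality $\sum_{m\le \ell}x_m\le \ell$, applied for $\ell=m_s$ when $m_s\le i$. When $m_s>i$ the condition is automatic from the bound in (ii).
\item $p_s\le i$. Unwinding the recursion gives $p_r=\max_t\bigl(m_t-j+t+(r-t)\bigr)$, compared also against the constraint $p_r\ge r$. Since $m_t\le k$, we get $m_t-j+r\le k-j+r-(r-t)\cdot 0$. The bound needed is $m_t-j+t+(r-t)\le k-j+r-r+\ldots$, and I will check it carefully using $k=i+j-r$ and $p_r\ge r$ (which holds since $r\le i$).
\item The upper bound $m_s\le p_s+j-s$ holds by construction.
\end{enumerate}

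The main obstacle is the bookkeeping in (i) and (ii): showing that the greedy choice never overshoots $m_s$ or $i$. I would handle this by a Hall-type argument. View the problem as a bipartite assignment of the tokens $m_s$ (sorted) to positions $p\in[i]$ with $p\le m_s\le p+j-s$. The prefix constraints in $Q$ supply Hall's condition on the left end, and the bound $x_m=0$ for $m>k$ supplies it on the right end. The ordering condition $p_1<\cdots<p_r$ is compatible with sorted $m$ by a standard exchange argument. Finally, SNP follows immediately: $Q_{i,j}^k$ is integral (total unimodularity), so the convex hull of $\operatorname{Supp}$ equals $Q_{i,j}^k$, and its lattice points are exactly $\operatorname{Supp}$.
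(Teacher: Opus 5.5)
Your plan matches the paper's. You use the same expansion of $C_{i,j}^k$ into products of interval sums $\beta_{p_s}+\cdots+\beta_{p_s+j-s}$, the same forward inclusion, the same SNP deduction, and in fact the same choice of $P$ for the reverse inclusion. But the reverse inclusion is not actually proved. Steps (i) and (ii) trail off into unfinished expressions (``$k-j+r-r+\ldots$'', ``I will check it carefully''), and the Hall-type fallback is only named. That fallback is not a routine citation either. The admissible window $p\le m_s\le p+j-s$ depends on the rank $s$ of $p$ inside $P$, so Hall's theorem only gives an injection $s\mapsto p_s$ that need not be order-preserving. You would still have to prove the uncrossing step. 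It does work, because both endpoints of the windows $[m_s-j+s,\,m_s]$ are nondecreasing in $s$, but that is exactly the content you left out.

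The missing observation is that your greedy rule has a closed form. Inductively $p_{s-1}+1=\max\{s,\ m_{s-1}-j+s\}$, and since $m_{s-1}\le m_s$ this gives $p_s=\max\{s,\ m_s-j+s\}$, which is the paper's definition. With this, (ii) is one line: $p_r\le\max\{r,\ k-j+r\}=\max\{r,\ i\}=i$, using $m_r\le k$ and $r\le i$. The latter comes from $k\ge j$; the fact you need is $r\le i$, not $p_r\ge r$. Step (i) reduces to two facts:
\begin{enumerate}[(a)]
\item $m_s-j+s\le m_s$, because $s\le r\le j$ (from $k\ge i$).
\item $s\le m_s$. Since $m_1,\dots,m_s\le m_s$, we have $s\le\sum_{m\le m_s}x_m$, which is at most $m_s$ by the prefix constraint when $m_s\le i$. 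Otherwise $s\le r\le i<m_s$.
\end{enumerate}
Strict increase and $p_1\ge 1$ then hold by construction.

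There is also a small slip in your forward direction. There the $m_s$ are not sorted, so $m_r$ need not be the largest index used. Argue instead that $p_s\le i-r+s$, so every $m_s\le p_s+j-s\le k$.
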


\begin{proof}
The simple root relation $\beta_m=t_m-t_{m+1}$ gives
\[
t_{p_s}-t_{p_s+j+1-s}
=
\beta_{p_s}+\beta_{p_s+1}+\cdots+\beta_{p_s+j-s}.
\]
Substituting this into Lemma~\ref{lemma-explicit-formula}, we obtain
\[
C_{i,j}^k
=
\sum_{1\le p_1<\cdots<p_r\le i}
\prod_{s=1}^r
\bigl(\beta_{p_s}+\beta_{p_s+1}+\cdots+\beta_{p_s+j-s}\bigr).
\]

We first prove that
\[
\operatorname{Supp}(C_{i,j}^k)\subseteq Q_{i,j}^k\cap \mathbb Z^n.
\]
Take a monomial $\beta_1^{x_1}\cdots \beta_n^{x_n}$ appearing in $C_{i,j}^k$. Then it arises from a choice of indices
\[
1\le p_1<\cdots<p_r\le i
\]
and
\[
q_s\in \{p_s,p_s+1,\dots,p_s+j-s\}
\qquad (1\le s\le r),
\]
such that
\[
\beta_1^{x_1}\cdots \beta_n^{x_n}
=
\beta_{q_1}\cdots \beta_{q_r}.
\]
Hence \(x_m\ge 0\) for all \(m\), and
\begin{equation}
\label{eq-snp1}
\sum_{m=1}^n x_m=r.
\end{equation}
Next, since \(p_1<\cdots<p_r\le i\), we have \(p_s\le i-r+s\). Therefore
\[
q_s\le p_s+j-s\le (i-r+s)+j-s=k.
\]
Thus, we have
\begin{equation}
\label{eq-snp2}
x_m=0 \qquad \text{for all } m>k.
\end{equation}
Finally, let \(1\le \ell\le i\). Then
\[
\sum_{m=1}^{\ell} x_m
=
\#\{\,s \mid q_s\le \ell\,\}.
\]
If \(q_s\le \ell\), then necessarily \(p_s\le q_s\le \ell\). Since the \(p_s\) are distinct, there are at most \(\ell\) indices \(s\) with \(p_s\le \ell\). Hence
\begin{equation}
\label{eq-snp3}
\sum_{m=1}^{\ell} x_m
=
\#\{\,s \mid q_s\le \ell\,\}
\le
\#\{\,s \mid p_s\le \ell\,\}
\le \ell.
\end{equation}
By \eqref{eq-snp1}, \eqref{eq-snp2}, and \eqref{eq-snp3}, we conclude that \((x_1,\dots,x_n)\in Q_{i,j}^k\), proving the first inclusion.\\

We now prove the reverse inclusion
\[
Q_{i,j}^k\cap \mathbb Z^n\subseteq \operatorname{Supp}(C_{i,j}^k).
\]
Take any lattice point
\[
x \coloneqq (x_1,\dots,x_n)\in Q_{i,j}^k\cap \mathbb Z^n.
\]
Since $\sum_{m=1}^n x_m=r$, we may form a weakly increasing sequence
\[
q_1\le q_2\le \cdots\le q_r
\]
whose multiset contains exactly $x_m$ copies of $m$ for each $m$. Define
\begin{equation}
\label{eq-snp4}
p_s:=\max\{\,s,\ q_s-j+s\,\}
\qquad (1\le s\le r).
\end{equation}

We first verify that
\begin{equation}
\label{eq-snp5}
p_s\le q_s\le p_s+j-s.
\end{equation}
The inequality $q_s\le p_s+j-s$ follows directly from the definition of $p_s$ in \eqref{eq-snp4}. It remains to show that $p_s\le q_s$. Since $s\le r\le j$, we have
\[
q_s-j+s\le q_s.
\]
Thus it suffices to prove that $s\le q_s$. Since \(q_1\le \cdots \le q_r\), the first \(s\) terms all satisfy
\(q_h\le q_s\). Hence
\[
s \le \#\{h\mid q_h\le q_s\}
= \sum_{m=1}^{q_s} x_m.
\]
If $q_s \le i$, then by the defining inequalities of $Q_{i,j}^k$ we have
\[
s \le \sum_{m=1}^{q_s} x_m \le q_s.
\]
If $q_s > i$, then
\[
s \le r \le i < q_s.
\]
Thus, in all cases we have $s \le q_s$. Therefore $p_s \le q_s$, and \eqref{eq-snp5} follows.\\

Next, we verify that
\begin{equation}
\label{eq-snp6}
1\le p_1<\cdots<p_r\le i.
\end{equation}
The inequality $1\le p_1$ is immediate. To show $p_r\le i$, note that $r\le i$ and $q_r\le k$ (since $x_m=0$ for all $m>k$). Hence
\[
q_r-j+r \le k-j+r = i,
\]
and therefore
\[
p_r=\max\{\,r,\ q_r-j+r\,\}\le i.
\]
It remains to show that $p_s<p_{s+1}$ for $1\le s<r$. Since $q_s\le q_{s+1}$, we have
\[
q_s-j+s < q_s-j+s+1 \le q_{s+1}-j+(s+1).
\]
Together with $s<s+1$, this implies
\[
p_s = \max\{\,s,\ q_s-j+s\,\}
<
\max\{\,s+1,\ q_{s+1}-j+(s+1)\}
= p_{s+1}.
\]
Thus \eqref{eq-snp6} holds. Combining \eqref{eq-snp5} and \eqref{eq-snp6}, we get a subset $P=\{p_1,\dots,p_r\}\subset [i]$ such that
\[
q_s\in \{p_s,p_s+1,\dots,p_s+j-s\}
\quad (1\le s\le r).
\]
Hence the summand
\[
\prod_{s=1}^r
\bigl(\beta_{p_s}+\beta_{p_s+1}+\cdots+\beta_{p_s+j-s}\bigr)
\]
contains the monomial
\[
\beta_{q_1}\cdots \beta_{q_r}
=
\beta_1^{x_1}\cdots \beta_n^{x_n}.
\]
Hence $x\in \operatorname{Supp}(C_{i,j}^k)$, proving the reverse inclusion. Therefore,
\[
\operatorname{Supp}(C_{i,j}^k)=Q_{i,j}^k\cap \mathbb{Z}^n.
\]
Since $Q_{i,j}^k$ is an integral polytope, $C_{i,j}^k$ has SNP.
\end{proof}

In the general setting of equivariant Schubert calculus on Grassmannians, 
Anderson--Richmond--Yong~\cite[Theorem~1.1]{MR3109734} proved the saturation property
\[
C_{\lambda,\mu}^{\nu} \neq 0 \iff C_{N\lambda,N\mu}^{N\nu} \neq 0.
\]
In our setting, working in the equivariant cohomology of $\mathbb{P}^{\infty}$, 
the nonvanishing condition \eqref{eq-nonzero} implies the saturation property
\[
C_{i,j}^k \neq 0 \iff C_{Ni,Nj}^{Nk} \neq 0.
\]
We now strengthen this property by describing how the supports behave under scaling. In the equivariant cohomology of $\mathbb{P}^{\infty}$, we regard both $C_{Ni,Nj}^{Nk}$ and $(C_{i,j}^k)^N$ as polynomials in $\mathbb{Z} [\beta_1,\beta_2,\ldots]$. Thus their supports may be viewed as
subsets of $\mathbb{R}^{\infty}$ via their exponent vectors (with finitely many nonzero entries). Define a linear map $\pi_N:\mathbb{R}^{\infty}\to \mathbb{R}^{\infty}$ by
\[
\pi_N(x_1,x_2,\ldots)
=
\left(
\sum_{m=1}^{N}x_m,\,
\sum_{m=N+1}^{2N}x_m,\,
\sum_{m=2N+1}^{3N}x_m,\ldots
\right).
\]

\begin{theorem}
\label{thm-pi-N}
We have
\[
\pi_N\bigl(\operatorname{Supp}(C_{Ni,Nj}^{Nk})\bigr)
=
\operatorname{Supp}\bigl((C_{i,j}^k)^N\bigr).
\]
\end{theorem}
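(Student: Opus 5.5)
The plan is to reduce Theorem~\ref{thm-pi-N} to a comparison of lattice points of polytopes, using Theorem~\ref{thm-supp}. We work in $\mathbb{P}^\infty$, so that the constraint $k\le n$ never binds. By \eqref{eq-nonzero}, $C_{i,j}^k\neq 0$ if and only if $C_{Ni,Nj}^{Nk}\neq0$. When both vanish, both supports are empty and there is nothing to prove, so we assume $\max\{i,j\}\le k\le i+j$.

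\textbf{Step 1: the right-hand side.} All coefficients of $C_{i,j}^k$ in the $\beta$'s are positive, so no cancellation can occur in the $N$-th power. Hence
\[
\operatorname{Supp}\bigl((C_{i,j}^k)^N\bigr)=\operatorname{Supp}(C_{i,j}^k)+\cdots+\operatorname{Supp}(C_{i,j}^k),
\]
the $N$-fold Minkowski sum. By Theorem~\ref{thm-supp}, each summand is $Q_{i,j}^k\cap\mathbb{Z}^n$.

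The constraint matrix of $Q_{i,j}^k$ consists of prefix-sum (interval) rows. It is therefore totally unimodular, and a polytope defined by such a matrix with integral data has the integer decomposition property (Baum--Trotter). Consequently every lattice point of $NQ_{i,j}^k$ is a sum of $N$ lattice points of $Q_{i,j}^k$. The reverse inclusion is trivial. This gives
\[
\operatorname{Supp}\bigl((C_{i,j}^k)^N\bigr)=NQ_{i,j}^k\cap\mathbb{Z}^\infty.
\]
This step is the main point requiring an outside citation. If needed, IDP could instead be verified directly: peel off one lattice point of $Q_{i,j}^k$ greedily while preserving the prefix inequalities.

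\textbf{Step 2: the left-hand side.} By Theorem~\ref{thm-supp}, $\operatorname{Supp}(C_{Ni,Nj}^{Nk})$ is the set of $x\in\mathbb{Z}_{\ge0}^\infty$ satisfying three conditions:
\[
\sum_m x_m=Nr, \qquad \sum_{m\le \ell}x_m\le \ell \ \text{for } \ell\le Ni, \qquad x_m=0 \ \text{for } m>Nk.
\]
Here we use $Ni+Nj-Nk=Nr$. It therefore suffices to show
\[
\pi_N\bigl(Q_{Ni,Nj}^{Nk}\cap\mathbb{Z}^\infty\bigr)=NQ_{i,j}^k\cap\mathbb{Z}^\infty.
\]

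\emph{Inclusion $\subseteq$.} Set $y=\pi_N(x)$. Then $y$ is integral and nonnegative, with $\sum y_m=Nr$. For $\ell\le i$ we have $\sum_{m\le\ell}y_m=\sum_{m\le N\ell}x_m\le N\ell$. Finally, $y_m=0$ for $m>k$, because block $m$ consists of coordinates exceeding $Nk$.

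\emph{Inclusion $\supseteq$.} Given $y\in NQ_{i,j}^k\cap\mathbb{Z}^\infty$, define $x$ by placing all of $y_m$ on the last coordinate of block $m$: set $x_{Nm}=y_m$ and $x=0$ elsewhere. Then $\pi_N(x)=y$, the total is $Nr$, and the support lies in $[1,Nk]$.

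For the prefix bounds, take $\ell=N(m-1)+t$ with $1\le t\le N$ and $m\le i$. If $t<N$, the prefix sum equals $\sum_{m'<m}y_{m'}\le N(m-1)<\ell$. If $t=N$, it equals $\sum_{m'\le m}y_{m'}\le Nm=\ell$. Prefixes with $\ell>Ni$ are unconstrained. Hence $x\in Q_{Ni,Nj}^{Nk}$.

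Combining the two steps yields Theorem~\ref{thm-pi-N}.
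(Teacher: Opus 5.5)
Your proposal is correct and follows the paper's proof essentially step for step: Step 2 is exactly Lemma~\ref{lemma-pi-N}, with the same lift $x_{Nm}=y_m$, and Step 1 is the paper's use of the integer decomposition property \eqref{eq-idp}. The differences are minor: you justify IDP via total unimodularity of the interval constraint matrix (Baum--Trotter) rather than via $Q_{i,j}^k$ being an integral generalized permutohedron, and you make explicit the no-cancellation and vanishing-case remarks that the paper leaves implicit.
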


\begin{example}
Let $i=1$, $j=2$, $k=2$. Then $r=1$, and
\[
C_{1,2}^2 = \beta_1 + \beta_2.
\]
Now take $N=2$. Then
\[
C_{2,4}^4 = \beta_1 \beta_2 + \beta_1 \beta_3 + \beta_1 \beta_4
+ \beta_2^2 + 2 \beta_2 \beta_3 + 2 \beta_2 \beta_4
+ \beta_3^2 + 2 \beta_3 \beta_4 
+ \beta_4^2,
\]
and 
\[
\left( C_{1,2}^2 \right)^2 = \beta_1^2 + 2 \beta_1 \beta_2 + \beta_2^2.
\]
We may also view $\pi_N$ as an operator on $\mathbb{Z}[\beta_1, \beta_2, \dots]$ by defining
\[
\pi_N(\beta^x) := \beta^{\pi_N(x)} \quad \text{for all } x \in \mathbb{Z}_{\ge 0}^{\infty}.
\]
Then
\[
\pi_2(\beta_1)=\pi_2(\beta_2)= \beta_1, 
\qquad
\pi_2(\beta_3)=\pi_2(\beta_4)= \beta_2.
\]
This induces a map on monomials, and hence on supports:
\[ 
\begin{tabular}{|c|c|c|c|c|c|c|c|c|c|} 
\hline 
$\beta^x$ & 
$\beta_1 \beta_2$ & 
$\beta_1 \beta_3$ & 
$\beta_1 \beta_4$ & 
$\beta_2^2$ & 
$\beta_2 \beta_3$ & 
$\beta_2 \beta_4$ & 
$\beta_3^2$ & 
$\beta_3 \beta_4$ & 
$\beta_4^2$ \\ 

\hline 
$\pi_N(\beta^x)$ & 
$\beta_1^2$ & 
$\beta_1 \beta_2$ & 
$\beta_1 \beta_2$ & 
$\beta_1^2$ & 
$\beta_1 \beta_2$ & 
$\beta_1 \beta_2$ & 
$\beta_2^2$ & 
$\beta_2^2$ & 
$\beta_2^2$ \\ 
\hline 
\end{tabular} 
\]
Therefore,
\[
\pi_2\bigl(\operatorname{Supp}(C_{2,4}^{4})\bigr)
=
\operatorname{Supp}\bigl((C_{1,2}^2)^2\bigr).
\]
\end{example}

To prove Theorem~\ref{thm-pi-N}, we require the following lemma. Throughout, for a polytope $Q \subset \mathbb{R}^\infty$ and an integer $N \ge 1$, we denote by $NQ$ the scalar dilation of $Q$:
\[
NQ := \{ Nx \mid x \in Q \}.
\]

\begin{lemma}
\label{lemma-pi-N}
We have
\[
\pi_N\bigl(Q_{Ni,Nj}^{Nk}\bigr)=NQ_{i,j}^k.
\]
\end{lemma}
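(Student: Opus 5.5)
Both inclusions can be checked directly from the defining inequalities; we work in $\mathbb{R}^\infty$, dropping the bound $x_m=0$ for $m>n$ in the $\mathbb{P}^\infty$ setting. Write $r=i+j-k$, so that the parameter for $(Ni,Nj,Nk)$ is $Nr$. Recall that
\[
NQ_{i,j}^k=\Bigl\{y\ge 0 \ \Big|\ \textstyle\sum_m y_m=Nr,\ \sum_{m\le \ell} y_m\le N\ell \ (1\le \ell\le i),\ y_m=0 \ (m>k)\Bigr\}.
\]

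\textbf{Inclusion $\subseteq$.} Take $x\in Q_{Ni,Nj}^{Nk}$ and set $y=\pi_N(x)$, so $y_m=\sum_{t=(m-1)N+1}^{mN}x_t$.
\begin{enumerate}[(i)]
\item Nonnegativity of $y$ and the total sum $Nr$ are immediate.
\item If $x_t=0$ for $t>Nk$, then every block with $m>k$ lies beyond $Nk$, so $y_m=0$ for $m>k$.
\item For $\ell\le i$, the partial sum $\sum_{m\le\ell}y_m=\sum_{t\le N\ell}x_t$. Since $N\ell\le Ni$, the constraint of $Q_{Ni,Nj}^{Nk}$ at index $N\ell$ gives the bound $N\ell$.
\end{enumerate}
Hence $y\in NQ_{i,j}^k$.

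\textbf{Inclusion $\supseteq$.} Given $y\in NQ_{i,j}^k$, we must produce a preimage. The natural choice is to spread each block evenly: set $x_t=y_m/N$ for $(m-1)N<t\le mN$.
\begin{enumerate}[(i)]
\item Clearly $x\ge 0$, $\pi_N(x)=y$, the total is $Nr$, and $x_t=0$ for $t>Nk$.
\item For the partial-sum constraints with $L\le Ni$, write $L=(\ell-1)N+u$ with $1\le u\le N$ and $\ell\le i$. Then
\[
\sum_{t\le L}x_t=\sum_{m<\ell}y_m+\tfrac{u}{N}\,y_\ell .
\]
\item This partial sum is a convex combination of $S_{\ell-1}:=\sum_{m<\ell}y_m$ and $S_\ell:=\sum_{m\le \ell}y_m$, namely $(1-\frac uN)S_{\ell-1}+\frac uN S_\ell$.
\item Using $S_{\ell-1}\le N(\ell-1)$ and $S_\ell\le N\ell$, it is at most $N(\ell-1)+u=L$.
\end{enumerate}
So $x\in Q_{Ni,Nj}^{Nk}$.

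\textbf{Main obstacle.} The delicate step is this interpolation estimate for $L$ not a multiple of $N$; the convexity observation in (iii) is what handles it. If the lemma is needed only for lattice points, one would instead distribute each $y_m$ as evenly as possible in integers, front-loading the block. The same bound still holds after rounding, since $S_{\ell-1}+\min(y_\ell,u)\le L$ can be checked in both cases $y_\ell\le u$ and $y_\ell>u$. Here the case $y_\ell>u$ uses $S_\ell\le N\ell$, which gives $S_{\ell-1}\le N\ell-y_\ell$.
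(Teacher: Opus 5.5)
Your proof is correct. The forward inclusion is exactly the paper's argument; the difference is the preimage you choose for the reverse inclusion.

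\textbf{The two preimages.} You spread $y_g$ uniformly over the block $(g-1)N<t\le gN$, and you then need the convex-interpolation estimate, which is valid. The paper instead puts all of $y_g$ on the last coordinate of its block: $x_{gN}:=y_g$ and $x_m:=0$ otherwise. Every constraint of $Q_{Ni,Nj}^{Nk}$ is an upper bound on a prefix sum. Pushing mass to the end of each block therefore minimizes every prefix sum simultaneously among all preimages of $y$, and the check becomes trivial. For $(g-1)N<s<gN$ the prefix sum is $S_{g-1}\le N(g-1)<s$, and at $s=gN$ it is $S_g\le Ng$. The step you flag as delicate simply disappears.

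\textbf{Integrality.} The paper's choice also preserves integrality: if $y\in\mathbb{Z}^\infty$ then so is $x$. This is reused verbatim in the proof of Theorem~\ref{thm-pi-N} to lift lattice points of $NQ_{i,j}^k$. Your uniform preimage is not integral, so there you would need your rounding patch, and as sketched it is slightly off. Write $y_\ell=qN+\rho$ with $0\le\rho<N$. The front-loaded even integer distribution then has within-block prefix $uq+\min(u,\rho)$, which equals $\min(y_\ell,u)$ only when $y_\ell\le N$.

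The bound still holds, but the natural case split is on $q$, not on whether $y_\ell>u$.
\begin{itemize}
\item If $q=0$, then $S_{\ell-1}\le N(\ell-1)$ alone suffices. This includes the case $y_\ell>u$, where $S_\ell\le N\ell$ would not be enough, since it only gives $N\ell-y_\ell+u>L$.
\item If $q\ge 1$, then $S_{\ell-1}\le N\ell-y_\ell$ shows that $L$ exceeds the prefix sum by at least $(q-1)(N-u)+\rho-\min(u,\rho)\ge 0$.
\end{itemize}
Back-loading, as in the paper, avoids all of this.
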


\begin{proof}
We compare the following two polytopes:
\[
Q_{Ni,Nj}^{Nk}
=
\left\{
x\in \mathbb{R}_{\ge 0}^{\infty}
\;\middle|\;
\sum_{m\ge 1}x_m=Nr,\ 
\sum_{m=1}^{\ell}x_m\le \ell \text{ for } 1\le \ell\le Ni,\ 
x_m=0 \text{ for } m>Nk
\right\},
\]
\[
NQ_{i,j}^k
=
\left\{
y\in \mathbb{R}_{\ge 0}^{\infty}
\;\middle|\;
\sum_{g\ge 1}y_g=Nr,\ 
\sum_{g=1}^{\ell}y_g\le N\ell \text{ for } 1\le \ell\le i,\ 
y_g=0 \text{ for } g>k
\right\}.
\]
We first prove that
\[
\pi_N\bigl(Q_{Ni,Nj}^{Nk}\bigr)\subseteq NQ_{i,j}^k.
\]
Let $x\in Q_{Ni,Nj}^{Nk}$ and set $y=\pi_N(x)$. Then $y_g\ge 0$ for all $g$, and
\[
\sum_{g\ge 1} y_g
=
\sum_{m\ge 1} x_m
=
Nr.
\]
Moreover, if $g>k$, then
\[
y_g=\sum_{m=(g-1)N+1}^{gN} x_m =0,
\]
because $(g-1)N+1>kN=Nk$, so every index in the sum is greater than \(Nk\). Finally, for
\(1\le \ell\le i\), we have
\[
\sum_{g=1}^{\ell} y_g
=
\sum_{m=1}^{N\ell} x_m
\le N\ell,
\]
since \(N\ell\le Ni\). Thus \(y\in NQ_{i,j}^k\), proving the first inclusion.\\

For the reverse inclusion, let \(y=(y_1,y_2,\dots)\in NQ_{i,j}^k\). Define
\(x\in \mathbb{R}_{\ge 0}^{\infty}\) by
\[
x_{gN}:=y_g \quad (1\le g\le k),
\qquad
x_m:=0 \quad \text{for all other } m.
\]
Then \(\pi_N(x)=y\), and clearly \(x_m=0\) for all \(m>Nk\). Moreover,
\[
\sum_{m\ge 1}x_m=\sum_{g\ge 1}y_g=Nr.
\]
It remains to verify the prefix inequalities for \(x\). Let \(1\le s\le Ni\), and write
\[
(g-1)N < s \le gN
\]
for some integer \(g\) with \(1\le g\le i\). If \(s<gN\), then
\[
\sum_{m=1}^s x_m
=
\sum_{h=1}^{g-1} y_h
\le N(g-1)
\le s.
\]
If \(s=gN\), then
\[
\sum_{m=1}^{gN} x_m
=
\sum_{h=1}^{g} y_h
\le Ng
=
s.
\]
Thus \(x\in Q_{Ni,Nj}^{Nk}\). Therefore
\[
NQ_{i,j}^k\subseteq \pi_N\bigl(Q_{Ni,Nj}^{Nk}\bigr).
\]
This completes the proof.
\end{proof}

M.~Larson (private communication) asked whether \(Q_{i,j}^k\) is a generalized permutohedron in the sense of Postnikov~\cite[Definition~6.1]{MR2487491}. This is indeed the case, as can be verified directly from the defining inequalities, which are given by prefix sum constraints and hence correspond to a submodular function \(z_I\) in Postnikov’s description.\\

Since \(Q_{i,j}^k\) is an integral generalized permutohedron, it has the integer decomposition property; see, e.g., Schrijver~\cite[Section~46.6]{MR1956926}. That is, for every integer \(N \ge 1\), we have
\begin{equation}
\label{eq-idp}
NQ_{i,j}^k \cap \mathbb{Z}^\infty
=
\bigl(Q_{i,j}^k \cap \mathbb{Z}^\infty\bigr)^{\oplus N}.
\end{equation}
Here \(S^{\oplus N}\) denotes the $N$-fold Minkowski sum of a subset \(S \subset \mathbb{R}^\infty\).

\begin{proof}[Proof of Theorem~\ref{thm-pi-N}]
By Theorem~\ref{thm-supp}, we have
\[
\operatorname{Supp}(C_{Ni,Nj}^{Nk})
=
Q_{Ni,Nj}^{Nk}\cap \mathbb{Z}^{\infty},
\qquad
\operatorname{Supp}(C_{i,j}^k)
=
Q_{i,j}^k\cap \mathbb{Z}^{\infty}.
\]
We first prove that
\[
\pi_N\bigl(\operatorname{Supp}(C_{Ni,Nj}^{Nk})\bigr)
=
NQ_{i,j}^k \cap \mathbb{Z}^{\infty}.
\]
Indeed, if
\[
x \in \operatorname{Supp}(C_{Ni,Nj}^{Nk}) = Q_{Ni,Nj}^{Nk}\cap \mathbb{Z}^{\infty},
\]
then by Lemma~\ref{lemma-pi-N},
\[
\pi_N(x)\in \pi_N\bigl(Q_{Ni,Nj}^{Nk}\bigr)=NQ_{i,j}^k.
\]
Since \(x\in \mathbb{Z}^{\infty}\) and \(\pi_N\) has integer coefficients, we also have
\[
\pi_N(x)\in \mathbb{Z}^{\infty}.
\]
Thus
\[
\pi_N\bigl(\operatorname{Supp}(C_{Ni,Nj}^{Nk})\bigr)
\subseteq
NQ_{i,j}^k \cap \mathbb{Z}^{\infty}.
\]
Conversely, let
\[
y\in NQ_{i,j}^k \cap \mathbb{Z}^{\infty}.
\]
In the proof of Lemma~\ref{lemma-pi-N}, we showed that there exists
\[
x\in Q_{Ni,Nj}^{Nk}
\]
such that \(\pi_N(x)=y\), namely by setting
\[
x_{gN}:=y_g \quad (1\le g\le k),
\qquad
x_m:=0 \quad \text{for all other } m.
\]
Since \(y\in \mathbb{Z}^{\infty}\), this \(x\) also lies in \(\mathbb{Z}^{\infty}\). Hence
\[
x\in Q_{Ni,Nj}^{Nk}\cap \mathbb{Z}^{\infty}
=
\operatorname{Supp}(C_{Ni,Nj}^{Nk}),
\]
and therefore
\[
y=\pi_N(x)\in \pi_N\bigl(\operatorname{Supp}(C_{Ni,Nj}^{Nk})\bigr).
\]
This proves
\[
\pi_N\bigl(\operatorname{Supp}(C_{Ni,Nj}^{Nk})\bigr)
=
NQ_{i,j}^k \cap \mathbb{Z}^{\infty}.
\]
On the other hand, by Theorem~\ref{thm-supp} and \eqref{eq-idp},
\[
\operatorname{Supp}(C_{i,j}^k)
=
Q_{i,j}^k\cap \mathbb{Z}^{\infty},
\]
so
\[
\operatorname{Supp}\bigl((C_{i,j}^k)^N\bigr)
=
\bigl(Q_{i,j}^k\cap \mathbb{Z}^{\infty}\bigr)^{\oplus N}
=
NQ_{i,j}^k \cap \mathbb{Z}^{\infty}.
\]
Therefore,
\[
\pi_N\bigl(\operatorname{Supp}(C_{Ni,Nj}^{Nk})\bigr)
=
\operatorname{Supp}\bigl((C_{i,j}^k)^N\bigr). \qedhere
\]
\end{proof}

Finally, we give a corollary of Theorem~\ref{thm:main} and an open problem.

\begin{corollary}
\label{cor:term-count}
The polynomial $C_{i,j}^k \in \mathbb{Z}_{\ge 0}[\beta_1, \ldots, \beta_n]$ contains exactly
\[
r! \binom{i}{r} \binom{j}{r}
\]
monomial terms, counted with multiplicity.
\end{corollary}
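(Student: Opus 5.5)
By Theorem~\ref{thm:main}, $C_{i,j}^k$ is the sum of $\operatorname{wt}(M)$ over all $r$-matchings $M$ of $\mathcal{G}^i_j$. Each $\operatorname{wt}(M)$ is a single monomial $\prod_{e\in M}\beta_{\operatorname{wt}(e)}$ with coefficient $1$. So the number of monomial terms counted with multiplicity equals the number of $r$-matchings of $\mathcal{G}^i_j$. The plan is therefore to reduce the claim to a pure counting statement and then count matchings directly.

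The count goes as follows. First choose the set $P\subseteq[i]$ of $r$ vertices in $A$ that are covered by the matching; there are $\binom{i}{r}$ ways. Next choose the set $Q\subseteq[j]$ of $r$ vertices in $B$ that are covered; there are $\binom{j}{r}$ ways. Finally choose a bijection $P\to Q$ saying which $a$ is joined to which $b$; there are $r!$ ways. These choices are independent, and distinct triples $(P,Q,\text{bijection})$ give distinct matchings. Moreover, every $r$-matching arises exactly once, because the complete bipartite graph $\mathcal{G}^i_j$ contains every edge $a_gb_h$. Multiplying gives $r!\binom{i}{r}\binom{j}{r}$.

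As a cross-check, I would also count via the decomposition used in Lemma~\ref{lemma-rule}. For fixed $P$, the set $\mathcal{G}^i_j(P)$ consists of ordered tuples $(q_1,\dots,q_r)$ of distinct elements of $[j]$, and there are $j(j-1)\cdots(j-r+1)=r!\binom{j}{r}$ of them. Equivalently, by the product formula $\prod_s\bigl(\sum_{m=p_s}^{p_s+j-s}\beta_m\bigr)$, the $s$-th factor has $j+1-s$ terms. Summing over the $\binom{i}{r}$ choices of $P$ gives the same total.

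There is no real obstacle; the only point needing care is the phrase ``counted with multiplicity.'' One must be clear that this means the sum of the coefficients of $C_{i,j}^k$ in the $\beta$-basis, equivalently its value at $\beta_m=1$ for all $m$. Each matching contributes exactly one monomial with coefficient one, and no cancellation occurs since all contributions are positive. For degenerate $r$ (negative, or larger than $\min\{i,j\}$), both sides vanish: the binomial coefficients are zero and there are no matchings.
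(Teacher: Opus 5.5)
Your proposal is correct and follows the paper's proof: Theorem~\ref{thm:main} identifies the terms, counted with multiplicity, with the $r$-matchings of $\mathcal{G}^i_j$, which are counted by choosing $r$ vertices on each side in $\binom{i}{r}\binom{j}{r}$ ways and then one of the $r!$ bijections between them. Your cross-check via Lemma~\ref{lemma-rule}, giving $\binom{i}{r}\cdot j(j-1)\cdots(j-r+1)$, counts the same set and arrives at the same total.
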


\begin{proof}
By Theorem~\ref{thm:main}, the monomials of $C_{i,j}^k$ are in bijection (with multiplicity) with the $r$-matchings of $\mathcal{G}^i_j$. Thus the claim reduces to counting such matchings. To form an $r$-matching, one first chooses a subset $X \subseteq A$ of size $r$ and a subset $Y \subseteq B$ of size $r$, which can be done in $\binom{i}{r}\binom{j}{r}$ ways. One then matches the vertices in $X$ with those in $Y$, and there are $r!$ such bijections.
\end{proof}

Motivated by Corollary~\ref{cor:term-count}, we say that a combinatorial rule for the general constant $C_{\lambda,\mu}^{\nu}$ in the equivariant Schubert calculus of the Grassmannian is \emph{monomial positive} if each weight assigned to a combinatorial object is a monomial in the variables $\beta_1,\dots,\beta_n$. In particular, monomial positivity directly implies Graham positivity. This leads to the following open problem.

\begin{problem}
Give a monomial-positive combinatorial rule for the constants $C_{\lambda,\mu}^{\nu}$ in the equivariant Schubert calculus of the Grassmannian.
\end{problem}

\subsection{Knutson--Tao puzzles}
\label{subsec:puzzle}

We refer to Knutson--Tao~\cite[Section~1.2]{MR1997946} for the puzzle rule. For convenience, we adopt their notation and recall their result. Let $\mathcal{P}_{i,j}^k$ denote the set of puzzles with boundary
\[
0^{n-i}\,1\,0^{i}, \quad 0^{n-j}\,1\,0^{j}, \quad 0^{n-k}\,1\,0^{k}
\]
along $AB$, $BC$, and $AC$, respectively. The left picture in Figure~\ref{fig:puzzle} shows an example of a puzzle in $\mathcal{P}_{2,3}^3$.

\begin{theorem}[{\cite[Theorem~1.2]{MR1997946}}]
We have
\[
C_{i,j}^k = \sum_{Z \in \mathcal{P}_{i,j}^k} \operatorname{wt}(Z).
\]
\end{theorem}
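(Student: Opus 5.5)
This is the Knutson--Tao theorem specialized to $\mathbb{P}^n$. The paper cites it rather than reproving it, so a self-contained proof in this setting is needed. The plan is to avoid the general machinery and show that the puzzle sum satisfies the same characterization as $C_{i,j}^k$. By Lemma~\ref{lemma-explicit-formula} (equivalently Theorem~\ref{thm:main}), the coefficients are determined by two facts. First, the initial values $C_{0,j}^k=\delta_{jk}$ hold. Second, the recursion of Lemma~\ref{lemma-recurrence} holds, namely $C_{i+1,j}^{k}=(t_{i+1}-t_{k+1})C_{i,j}^{k}+C_{i,j}^{k-1}$. So it suffices to prove that $P_{i,j}^k:=\sum_{Z\in\mathcal{P}_{i,j}^k}\operatorname{wt}(Z)$ satisfies the same initial values and the same recursion.

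The first step is to describe $\mathcal{P}_{i,j}^k$ explicitly. Each boundary string contains a single $1$, so the puzzle pieces available are very restricted. These are the $0$- and $1$-triangles, the rhombi carrying label $10$, and the equivariant rhombus. Tracking the unique $1$-edges should show the following. The path entering along $AB$ at position determined by $i$ and the path entering along $BC$ at position determined by $j$ travel through the puzzle. They merge into the single $1$ exiting along $AC$ at the position determined by $k$. Along the way they use exactly $r=i+j-k$ equivariant rhombi. Each equivariant rhombus at a location with diagonal coordinates $(a,b)$ contributes $t_a-t_b$ (Knutson--Tao weight, up to their index convention). For $i=0$ the $AB$ side carries no "turning" room, so the puzzle is forced and $P_{0,j}^k=\delta_{jk}$.

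The second step is the recursion. Consider the edge of a puzzle in $\mathcal{P}_{i+1,j}^k$ nearest the $1$ on side $AB$, that is, the first row of pieces along the path from that $1$. Either the first piece is an equivariant rhombus, or it is not. If it is not, sliding the configuration by one unit identifies the remaining puzzle with an element of $\mathcal{P}_{i,j}^{k-1}$. If it is, deleting that rhombus row identifies the rest with an element of $\mathcal{P}_{i,j}^{k}$, and the deleted rhombus has weight $t_{i+1}-t_{k+1}$. Both maps need to be checked to be weight-preserving bijections. Summing the two cases then gives the recursion.

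I expect this second step to be the main obstacle. The difficulty is pinning down the precise geometry and indexing conventions, namely which $t$'s label the equivariant rhombus and how the strings $0^{n-i}10^i$ are oriented. The goal there is to make the peeled rhombus carry exactly the factor $t_{i+1}-t_{k+1}$. If the peeling argument becomes messy, the fallback is different. Show instead that every puzzle is determined by the set of rows $p_1<\dots<p_r\le i$ where the $AB$-path meets an equivariant rhombus. Then verify that the $s$-th such rhombus has weight $t_{p_s}-t_{p_s+j+1-s}$, which recovers Lemma~\ref{lemma-explicit-formula} term by term.
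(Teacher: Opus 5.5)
Your reduction is sound. The initial values $C_{0,j}^k=\delta_{jk}$ and the recursion of Lemma~\ref{lemma-recurrence} determine every $C_{i,j}^k$ by induction on $i$, so it suffices to show the puzzle sums obey the same rules. The paper gives no proof of this statement; it quotes Knutson--Tao. Its only puzzle-side content is the asserted description of the puzzles $Z_P$ and the Proposition matching $\operatorname{wt}(Z_P)$ with the $P$-term of Lemma~\ref{lemma-explicit-formula}, which is your fallback.

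The genuine gap is that all of the puzzle combinatorics is deferred. Your Step 1 is the entire content and is only asserted (``should show''). What is needed is a local analysis of which pieces can carry a $1$-edge of a given direction:
\begin{itemize}
\item Each of the three rhombus shapes admits exactly one ordinary $10$-labeling.
\item In the paper's conventions (Figure~\ref{fig:puzzle}), the vertical-shaped ordinary rhombus has its $1$'s on the edges parallel to $AB$.
\item The equivariant piece is that same vertical rhombus with its $1$'s on the edges parallel to $BC$.
\end{itemize}
Together with an argument excluding any further $1$-edges or $1$-triangles, this yields the structure. The $1$-lines from $AB$ and from $AC$ are straight runs of ordinary rhombi, of lengths $n-k$ and $k-i$, meeting at a single upward $1$-triangle. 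The $1$-line from $BC$ reaches that triangle by a staircase of exactly $i$ pieces: $k-j$ ordinary horizontal steps and $r$ equivariant diagonal steps.

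This analysis also shows that Step 2, as written, peels the wrong line. The $1$-line from the $1$ on $AB$ has all its $1$-edges parallel to $AB$, so it never passes through an equivariant piece. Its length $n-k$ also does not depend on $i$. So your dichotomy (``the first piece along the path from the $AB$ $1$ is or is not equivariant'') is degenerate and cannot produce the two terms of the recursion. Your fallback places the equivariant rhombi on the same wrong line.

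Your case assignment is right, but it must be applied to the last piece of the $BC$-line, the one at position $i+1$ adjacent to the central $1$-triangle:
\begin{itemize}
\item If that piece is equivariant, its weight is $t_{i+1}-t_{k+1}$. Deleting it moves the $1$-triangle one step back up the $BC$-line; the $AB$-line shifts one step toward $B$ and the $AC$-line lengthens by one. The result is a puzzle in $\mathcal{P}^k_{i,j}$.
\item If it is an ordinary horizontal step, deleting it moves the $1$-triangle one unit to the right; the $1$ on $AC$ moves one step toward $C$. The result is a puzzle in $\mathcal{P}^{k-1}_{i,j}$.
\end{itemize}
The remaining equivariant weights are preserved, because the $s$-th equivariant piece, at position $p$ along the $BC$-line, has weight $t_p-t_{p+j+1-s}$. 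This depends only on the unchanged initial segment of that line. The same formula is exactly what makes the fallback reproduce Lemma~\ref{lemma-explicit-formula} term by term.

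None of these verifications appears in your proposal. As it stands it is a strategy, and its only concrete geometric step points at the wrong part of the puzzle.
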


As illustrated in Figure~\ref{fig:puzzle}, the equivariant puzzle pieces are colored blue. The weight of the equivariant piece labeled $1$ in the right picture is $t_1 - t_3$. The weight of a puzzle is defined as the product of the weights of its equivariant pieces. Thus, the puzzle on the left has weight $(t_1 - t_4)(t_2 - t_4)$, while the one on the right has weight $(t_1 - t_3)(t_3 - t_4)$. This model does not exhibit manifest symmetry. Indeed, $\mathcal{P}_{2,3}^3$ contains only one puzzle, whereas $\mathcal{P}_{3,2}^3$ contains three puzzles. The equality holds since
\[
(t_1 - t_4)(t_2 - t_4)
=
(t_1 - t_3)(t_2 - t_3)
+
(t_1 - t_3)(t_3 - t_4)
+
(t_2 - t_4)(t_3 - t_4).
\]

\begin{figure}[h]
\centering
  \begin{tikzpicture}[scale=0.4]
    \coordinate (A) at (0,0);
    \coordinate (C) at (10,0);
    \coordinate (B) at (5,8.6); 

    \coordinate (A') at (15,0);
    \coordinate (B') at (25,0);
    \coordinate (C') at (20,8.6);

    \node[below left] at (A) {$A$};
    \node[below right] at (C) {$C$};
    \node[above] at (B) {$B$};
 
    \draw[thick] (A) -- (B) -- (C) -- cycle;
    \draw[thick] (A') -- (B') -- (C') -- cycle;

    \node[scale=0.7] at (1,0) {$0$};
    \node[scale=0.7] at (3,0) {$1$};
    \node[scale=0.7] at (5,0) {$0$};
    \node[scale=0.7] at (7,0) {$0$};
    \node[scale=0.7] at (9,0) {$0$};

    \node[scale=1] at (1,-1) {$t_5$};
    \node[scale=1] at (3,-1) {$t_4$};
    \node[scale=1] at (5,-1) {$t_3$};
    \node[scale=1] at (7,-1) {$t_2$};
    \node[scale=1] at (9,-1) {$t_1$}; 

    \node[scale=0.7] at (0.5,0.86) {$0$};
    \node[scale=0.7] at (1.5,2.58) {$0$};
    \node[scale=0.7] at (2.5,4.30) {$1$};
    \node[scale=0.7] at (3.5,6.02) {$0$};
    \node[scale=0.7] at (4.5,7.65) {$0$};

    \node[scale=0.7] at (9.5,0.86) {$0$};
    \node[scale=0.7] at (8.5,2.58) {$0$};
    \node[scale=0.7] at (7.5,4.30) {$0$};
    \node[scale=0.7] at (6.5,6.02) {$1$};
    \node[scale=0.7] at (5.5,7.65) {$0$};

    \draw[thick] (1,1.72) -- (2,0);
    \draw[thick] (2,3.44) -- (3,1.72);
    \draw[thick] (3,5.16) -- (6,0);
    \draw[thick] (4,6.88) -- (8,0);

    \draw[thick] (6,6.88) -- (2,0);
    \draw[thick] (7,5.16) -- (4,0);
    \draw[thick] (8,3.44) -- (6,0);
    \draw[thick] (9,1.72) -- (8,0);

    \draw[thick] (1,1.72) -- (9,1.72);
    \draw[thick] (6,3.44) -- (8,3.44);
    \draw[thick] (3,5.16) -- (5,5.16);
    \draw[thick] (4,6.88) -- (6,6.88);

    \node[scale=0.7] at (4,1.72) {$1$};
    \node[scale=0.7] at (3.5,2.58) {$1$};
    \node[scale=0.7] at (4.5,2.58) {$1$};
    \node[scale=0.7] at (5.5,4.30) {$1$};

    \node[scale=0.7] at (1.5,0.86) {$0$};
    \node[scale=0.7] at (2.5,0.86) {$0$};
    \node[scale=0.7] at (4.5,0.86) {$0$};
    \node[scale=0.7] at (5.5,0.86) {$0$};
    \node[scale=0.7] at (6.5,0.86) {$0$};
    \node[scale=0.7] at (7.5,0.86) {$0$};
    \node[scale=0.7] at (8.5,0.86) {$0$};

    \node[scale=0.7] at (2.5,2.58) {$0$};
    \node[scale=0.7] at (5.5,2.58) {$0$};
    \node[scale=0.7] at (6.5,2.58) {$0$};
    \node[scale=0.7] at (7.5,2.58) {$0$};
    \node[scale=0.7] at (3.5,4.30) {$0$};
    \node[scale=0.7] at (4.5,4.30) {$0$};
    \node[scale=0.7] at (6.5,4.30) {$0$};
    \node[scale=0.7] at (4.5,6.02) {$0$};
    \node[scale=0.7] at (5.5,6.02) {$0$};

    \node[scale=0.7] at (5,6.88) {$0$};
    \node[scale=0.7] at (4,5.16) {$0$};
    \node[scale=0.7] at (7,3.44) {$0$};
    \node[scale=0.7] at (2,1.72) {$0$};
    \node[scale=0.7] at (6,1.72) {$0$};
    \node[scale=0.7] at (8,1.72) {$0$};

    \fill[red, opacity=0.5] (3,1.72) -- (5,1.72) -- (4,3.44) -- cycle;

    \fill[gray, opacity=0.5] (2,3.44) -- (3,1.72) -- (4,3.44) -- (3,5.16) -- cycle;

    \fill[gray, opacity=0.5] (2,0) -- (4,0) -- (5,1.72) -- (3,1.72) -- cycle;

    \fill[blue, opacity=0.5] (4,3.44) -- (5,1.72) -- (6,3.44) -- (5,5.16) -- cycle;

    \fill[blue, opacity=0.5] (5,5.16) -- (6,3.44) -- (7,5.16) -- (6,6.88) -- cycle;
    
    \draw[thick] (20,1.72) -- (24,1.72);
    \draw[thick] (17,3.44) -- (21,3.44);
    \draw[thick] (18,5.16) -- (22,5.16);
    \draw[thick] (19,6.88) -- (21,6.88);
    \draw[thick] (17,0) -- (21,6.88);
    \draw[thick] (19,0) -- (20,1.72);
    \draw[thick] (21,3.44) -- (22,5.16);
    \draw[thick] (21,0) -- (23,3.44);
    \draw[thick] (23,0) -- (24,1.72);
    \draw[thick] (23,0) -- (19,6.88);
    \draw[thick] (21,0) -- (18,5.16);
    \draw[thick] (19,0) -- (17,3.44);
    \draw[thick] (17,0) -- (16,1.72);

    \fill[red, opacity=0.5] (17,0) -- (19,0) -- (18,1.72) -- cycle;
    \fill[gray, opacity=0.5] (16,1.72) -- (17,0) -- (18,1.72) -- (17,3.44) -- cycle;
    \fill[blue, opacity=0.5] (18,1.72) -- (19,0) -- (20,1.72) -- (19,3.44) -- cycle;
    \fill[blue, opacity=0.5] (21,3.44) -- (22,1.72) -- (23,3.44) -- (22,5.16) -- cycle;
    \fill[green, opacity=0.5] (19,3.44) -- (20,1.72) -- (22,1.72) -- (21,3.44) -- cycle;

    \node[scale=1] at (16,-1) {$t_5$};
    \node[scale=1] at (18,-1) {$t_4$};
    \node[scale=1] at (20,-1) {$t_3$};
    \node[scale=1] at (22,-1) {$t_2$};
    \node[scale=1] at (24,-1) {$t_1$};

    \draw[thick,->] (22,3.44) -- (24,0);
    \draw[thick,->] (22,3.44) -- (20,0);

    \node[scale=1] at (22,3.44) {$1$};
    \node[scale=1] at (20.5,2.58) {$2$};
    \node[scale=1] at (19,1.72) {$3$};
    
\end{tikzpicture}  
\caption{Puzzles in $\mathcal{P}_{2,3}^3$ and $\mathcal{P}_{3,2}^3$ with $n=4$.}
\label{fig:puzzle}
\end{figure}

We now relate the puzzle rule to our formulation. In the case of projective space, puzzles admit a particularly simple structure. From the $1$'s on $AB$ and $AC$, we trace straight gray bands that meet at a common red triangle. The band from the $1$ on $BC$ to this red triangle is colored blue if it corresponds to an equivariant puzzle piece, and green otherwise (see Figure~\ref{fig:puzzle}). This third band has length $i$ and contains $r$ blue pieces. Recording positions of blue pieces yields a subset $P = \{p_1, \dots, p_r\} \subset [i]$. For example, in the right picture of Figure~\ref{fig:puzzle}, we have $P = \{1,3\} \subset [3]$. For each subset $P \subset [i]$ with $|P|=r$, there is a unique corresponding puzzle, which we denote by $Z_P$.

\begin{proposition}
For each subset $P \subset [i]$ with $|P| = r$, we have
\[
\operatorname{wt}(Z_P)
=
\sum_{M \in \mathcal{G}^i_j(P)} \operatorname{wt}(M).
\]
\end{proposition}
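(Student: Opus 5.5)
By Lemma~\ref{lemma-rule}, the right-hand side equals $\prod_{s=1}^r (t_{p_s}-t_{p_s+j+1-s})$. The proposition therefore reduces to a statement purely about puzzles:
\[
\operatorname{wt}(Z_P)=\prod_{s=1}^r \bigl(t_{p_s}-t_{p_s+j+1-s}\bigr).
\]
The plan is to describe $Z_P$ explicitly and read off the weights of its $r$ blue equivariant pieces.

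First, fix coordinates. Index the diagonals of the triangle so that, following Knutson--Tao, an equivariant piece at the intersection of the diagonals through boundary positions $a$ and $b$ on $AC$ (with $a<b$ in the labeling $t_1,\dots,t_{n+1}$ read from $C$ to $A$) has weight $t_a-t_b$. In $Z_P$, the two gray bands issue from the $1$'s on $AB$ and $AC$ and meet at the red triangle. The third band runs from the $1$ on $BC$ to the red triangle and has $i$ pieces; we number them $1,\dots,i$ starting from the end nearest $C$, which matches the example $P=\{1,3\}$ in Figure~\ref{fig:puzzle}.

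Second, identify the two diagonals through the $m$-th piece of this band. The band lies along a fixed direction, so one of the diagonals through its $m$-th piece is the one ending at position $m$ on $AC$; this accounts for the term $t_{p_s}$. The other diagonal is what changes from piece to piece. Each green (non-equivariant) piece passes straight through, whereas each blue piece carries a $1$ away from the band, which shifts the index of the crossing line.

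We then argue by induction along the band that at the $s$-th blue piece, located at position $p_s$, the transverse diagonal ends at position $p_s+j+1-s$. The base case $s=1$ is the consistency check against the Pieri weight $t_1-t_{j+1}$ of Lemma~\ref{lemma-pieri}: for $i=1$, $P=\{1\}$, so the offset is $j$. For the inductive step, we note that each earlier blue piece consumes one of the $j$ lattice rows coming from the $1$ on $BC$, lowering the offset by one. The check against the example $(t_1-t_3)(t_3-t_4)$, for $P=\{1,3\}$ and $j=2$, gives $p_1+j=3$ and $p_2+j-1=4$, as required.

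The main obstacle is making this geometric bookkeeping rigorous. We would need to verify from the Knutson--Tao piece list that the puzzle is forced: the labels inside the band are determined once $P$ is chosen, and each blue rhombus really sits at the claimed diagonal intersection. The fallback is to avoid the geometry. We would show instead that the weights $w(P)=\operatorname{wt}(Z_P)$ satisfy the same recursion as in the proof of Lemma~\ref{lemma-explicit-formula}. Concretely, extending the band from $i$ to $i+1$ appends one piece at position $i+1$. If that piece is blue, it contributes $t_{i+1}-t_{k+1}$; if it is green, it contributes $1$ and leaves the other weights unchanged. By uniqueness of $Z_P$, this yields the product formula termwise. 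Multiplying the factors then completes the proof.
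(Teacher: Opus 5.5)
Your proposal takes essentially the same route as the paper. Both reduce via Lemma~\ref{lemma-rule} to the claim that the $s$-th blue piece of $Z_P$, at position $p_s$, has weight $t_{p_s}-t_{p_s+j+1-s}$; the paper simply asserts this ``by construction'' from the shape of $Z_P$, and your diagonal-tracking sketch gives more detail than it does. Two caveats: the Pieri comparison only covers $p_1=1$, so it is a sanity check rather than a base case, and your recursion fallback still depends on the same geometric facts about how $Z_P$ changes as the band grows.
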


\begin{proof}
By construction, the equivariant puzzle pieces in $Z_P$ occur exactly at positions $p_1,\dots,p_r$. The weight contributed by the piece at position $p_s$ is
\[
t_{p_s} - t_{p_s + j + 1 - s}.
\]
Hence,
\[
\operatorname{wt}(Z_P)
=
\prod_{s=1}^r \bigl(t_{p_s} - t_{p_s + j + 1 - s}\bigr).
\]
The claim now follows from Lemma~\ref{lemma-rule}.
\end{proof}

\subsection{Thomas--Yong edge-labeled tableaux}
\label{subsec:edge-labeled}

Following the notation of Thomas--Yong~\cite[Section~1.4]{MR3795480}, let $\mathcal{F}_{i,j}^k$ denote the set of valid (nonzero-weight) equivariant fillings of $(k/i,j)$.
\begin{theorem}[{\cite[Theorem~1.2]{MR3795480}}]
We have
\[
C_{i,j}^k = \sum_{T \in \mathcal{F}_{i,j}^k} \operatorname{wt}(T).
\]
\end{theorem}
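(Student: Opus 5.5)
The theorem is the Thomas--Yong rule specialized to projective space. Since it is a cited result, the task is to check it directly in this case against Lemma~\ref{lemma-explicit-formula}. I would not re-derive the general Grassmannian argument. Instead I would classify the valid fillings in $\mathcal{F}_{i,j}^k$ for one-row shapes and show that their weights sum to the explicit formula. This mirrors the puzzle comparison just carried out.

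\textbf{Step 1: parametrize the fillings by subsets.} For a one-row skew shape $k/i$ (that is, $(k)/(i)$), a filling with content $(j)$ places labels $1$ either in the $k-i$ boxes of the skew row or on horizontal edges of the row. Equivariant labels sit in boxes of $\lambda=(i)$ and contribute factors of the form $t_{\cdot}-t_{\cdot}$. I would show that validity (standardness, the ``edge'' constraints, and the requirement that the weight be nonzero) forces exactly $r=i+j-k$ of the $j$ labels to be equivariant. I would also show that these are recorded by their box positions $P=\{p_1<\cdots<p_r\}\subseteq[i]$, giving a bijection $P\mapsto T_P$ between $r$-subsets of $[i]$ and $\mathcal{F}_{i,j}^k$. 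This matches the puzzle bijection $P\mapsto Z_P$.

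\textbf{Step 2: compute weights.} Using the Thomas--Yong weight of an equivariant label in box $p_s$, which is $t_{\text{(diagonal index)}}-t_{\text{(index shifted by the number of later/unused labels)}}$, I would verify that the factor equals $t_{p_s}-t_{p_s+j+1-s}$. The shift $j+1-s$ should come from counting the labels remaining after the first $s-1$ equivariant ones are placed. Then
\[
\operatorname{wt}(T_P)=\prod_{s=1}^r\bigl(t_{p_s}-t_{p_s+j+1-s}\bigr)=\sum_{M\in\mathcal{G}^i_j(P)}\operatorname{wt}(M),
\]
by Lemma~\ref{lemma-rule}. Summing over $P$ recovers Lemma~\ref{lemma-explicit-formula}, which confirms the theorem in this case. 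It also ties the fillings to matchings, fiberwise over $P$.

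\textbf{Main obstacle.} The hard part is Step 1 together with the index bookkeeping in Step 2. The Thomas--Yong conventions for box weights, which involve Manhattan distance and the labels in the row, must be matched precisely to the shift $j+1-s$. One must also rule out fillings with coinciding or misplaced edge labels. My first approach would be to check small cases against $C_{2,3}^3$ and $C_{3,2}^3$ from the puzzle discussion, and then run the same induction on $i$ via Lemma~\ref{lemma-recurrence}. In that induction, adjoining box $i+1$ as equivariant or not reproduces the two terms of the recurrence.
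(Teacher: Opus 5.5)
Your strategy is sound, but it takes a genuinely different route from the paper. The paper gives no argument for this statement. It simply imports it as Theorem~1.2 of Thomas--Yong~\cite{MR3795480}, which holds for all Grassmannians and is proved there via edge-labeled jeu de taquin.

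Only afterwards, and for a different purpose, does the paper record the two facts your Steps~1--2 aim to establish:
\begin{itemize}
\item $\mathcal{F}_{i,j}^k=\{T_P : P\subseteq[i],\ |P|=r\}$;
\item $\operatorname{wt}(T_P)=\prod_{s=1}^r(t_{p_s}-t_{p_s+j+1-s})$.
\end{itemize}
It cites these as an observation from \cite[Section~1.4]{MR3795480}, without proof. You combine them with Lemma~\ref{lemma-explicit-formula}, which the paper proves independently from the Pieri rule and associativity, to get a self-contained verification for $\mathbb{P}^n$. Equivalently, the Proposition of Section~\ref{subsec:edge-labeled} together with Theorem~\ref{thm:main} already implies the statement in this case.

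Your route gives an elementary proof of the projective-space case that does not depend on the general jeu de taquin theory. The citation gives full generality and spares the reader from unpacking Thomas--Yong's conventions.

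The price is that your argument is only as complete as Step~1, which you leave as ``I would show''. From Thomas--Yong's actual definitions you must determine where edge labels may sit, whether an edge can carry several labels, what ``valid'' and ``nonzero weight'' exclude, and how the weight is normalized. From these you must derive two things:
\begin{itemize}
\item the surviving fillings are exactly those with edge labels $1,\dots,r$ on the lower edges beneath boxes $p_1<\dots<p_r$ of $\lambda=(i)$ (on the edges, not in the boxes as you wrote), with boxes $i+1,\dots,k$ filled by $r+1,\dots,j$;
\item label $s$ then has weight $t_{p_s}-t_{p_s+j+1-s}$.
\end{itemize}

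Two smaller points. The count of $r$ edge labels needs no validity argument, since each of the $k-i$ skew boxes holds exactly one label. And once the weights are matched termwise with Lemma~\ref{lemma-explicit-formula}, your fallback induction via Lemma~\ref{lemma-recurrence} is redundant.
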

Let $r \coloneqq i+j-k$. Each valid equivariant filling corresponds to a subset $P \subset [i]$ with $|P|=r$. For such a subset $P = \{p_1, \dots, p_r\}$ with $p_1 < \cdots < p_r$, we construct the equivariant filling $T_P \in \mathcal{F}_{i,j}^k$ by placing the edge labels $1,2,\dots,r$ below the boxes in positions $p_1,\dots,p_r$, and filling the remaining boxes in positions $[k]\setminus [i]$ with labels from $[j]\setminus [r]$. For example, in the left picture of Figure~\ref{fig:edge}, we have $T_{\{1,3\}} \in \mathcal{F}_{3,4}^5$. We now describe the weight of $T_P$ in this model using a simple observation from \cite[Section~1.4]{MR3795480}. For each edge label $s \in [r]$, we have
\[
\operatorname{wt}(s) = t_{p_s} - t_{p_s + j + 1 - s}.
\]
Thus, the weight of $T_P$ is given by
\[
\operatorname{wt}(T_P) = \prod_{s=1}^r \bigl(t_{p_s} - t_{p_s + j + 1 - s}\bigr).
\]
For example, in the left picture of Figure~\ref{fig:edge}, the edge label $2$ contributes the weight $t_3 - t_6$. The filling $T_{\{1,3\}} \in \mathcal{F}_{3,4}^5$ has weight $(t_1 - t_5)(t_3 - t_6)$, while $T_{\{2,3\}} \in \mathcal{F}_{4,3}^5$ has weight $(t_2 - t_5)(t_3 - t_5)$. As in the puzzle rule, this model does not exhibit manifest symmetry in $i$ and $j$. However, it admits a direct correspondence with our symmetric formulation.

\begin{figure}[h]
\centering
\begin{tikzpicture}[scale=0.7, every node/.style={font=\small}]

\draw (0,0) -- (5,0);
\draw (0,1) -- (5,1);
\draw (0,0) -- (0,1);
\draw (1,0) -- (1,1);
\draw (2,0) -- (2,1);
\draw (3,0) -- (3,1);
\draw (4,0) -- (4,1);
\draw (5,0) -- (5,1);

\draw (7,0) -- (12,0);
\draw (7,1) -- (12,1);
\draw (7,0) -- (7,1);
\draw (8,0) -- (8,1);
\draw (9,0) -- (9,1);
\draw (10,0) -- (10,1);
\draw (11,0) -- (11,1);
\draw (12,0) -- (12,1);

\node at (0.5,0) {1};
\node at (2.5,0) {2};
\node at (3.5,0.5) {3};
\node at (4.5,0.5) {4};

\node at (8.5,0) {1};
\node at (9.5,0) {2};
\node at (11.5,0.5) {3};

\node at (0.5,1.5) {\small $1^{\text{st}}$};
\node at (1.5,1.5) {\small $2^{\text{nd}}$};
\node at (2.5,1.5) {\small $3^{\text{rd}}$};
\node at (3.5,1.5) {\small $4^{\text{th}}$};
\node at (4.5,1.5) {\small $5^{\text{th}}$};

\node at (7.5,1.5) {\small $1^{\text{st}}$};
\node at (8.5,1.5) {\small $2^{\text{nd}}$};
\node at (9.5,1.5) {\small $3^{\text{rd}}$};
\node at (10.5,1.5) {\small $4^{\text{th}}$};
\node at (11.5,1.5) {\small $5^{\text{th}}$};

\end{tikzpicture}
\caption{Edge-labeled fillings in $\mathcal{F}_{3,4}^5$ and $\mathcal{F}_{4,3}^5$.}
\label{fig:edge}
\end{figure}

\begin{proposition}
For each subset $P \subset [i]$ with $|P| = r$, we have
\[
\operatorname{wt}(T_P)
=
\sum_{M \in \mathcal{G}^i_j(P)} \operatorname{wt}(M).
\]
\end{proposition}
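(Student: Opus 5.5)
The proof is the same as for the analogous proposition on Knutson--Tao puzzles. We combine the weight formula for $T_P$ recorded just above with Lemma~\ref{lemma-rule}.

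First, we justify that the weight of $T_P$ factors as a product over its edge labels. In the Thomas--Yong model the weight of a filling is the product of the weights of its equivariant (edge) labels. The boxes in positions $[k]\setminus[i]$ carry ordinary labels from $[j]\setminus[r]$ and contribute no factor. By construction of $T_P$, the edge labels are exactly $1,\dots,r$, with label $s$ placed below box $p_s$.

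Second, we identify the factor for each label $s$. We use the observation from \cite[Section~1.4]{MR3795480} quoted above: the label $s$ under box $p_s$ has weight $t_{p_s}-t_{p_s+j+1-s}$. This gives
\[
\operatorname{wt}(T_P)=\prod_{s=1}^r\bigl(t_{p_s}-t_{p_s+j+1-s}\bigr).
\]

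Third, Lemma~\ref{lemma-rule} states that the right-hand side equals $\sum_{M\in\mathcal{G}^i_j(P)}\operatorname{wt}(M)$. This proves the proposition.

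The only step needing care is the second one. We must check that the Thomas--Yong weight formula, read in the one-row shape $k/i$, really produces the index $p_s+j+1-s$. The reason is that exactly $s-1$ edge labels of smaller value precede label $s$ in the row. These account for the shift by $-s$ relative to the $j+1$ labels counted from box $p_s$. If that bookkeeping differs by convention, I would recheck it against the example $T_{\{1,3\}}\in\mathcal{F}^5_{3,4}$, where label $2$ at $p_2=3$ gives $t_3-t_6=t_{3+4+1-2}$, as stated.
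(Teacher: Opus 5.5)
Your proposal is correct and matches the paper's proof: the paper also reads off $\operatorname{wt}(T_P)=\prod_{s=1}^r\bigl(t_{p_s}-t_{p_s+j+1-s}\bigr)$ from the Thomas--Yong weight of the edge labels and then concludes by Lemma~\ref{lemma-rule}. Your extra heuristic for the index shift is not needed, since both you and the paper take this formula directly from Thomas--Yong. If you keep it, fix the count: $j+1-s=j-(s-1)$, so the $s-1$ smaller labels are subtracted from $j$, not from $j+1$.
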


\begin{proof}
By the formula for $\operatorname{wt}(T_P)$, the weight is given by the same product as in Lemma~\ref{lemma-rule}. Therefore, the claim follows.
\end{proof}

\section*{Acknowledgements}
I am grateful to Alexander Yong for suggesting this problem and for many helpful discussions and guidance throughout this work. I also thank Dave Anderson for valuable discussions.

\bibliographystyle{abbrv}
\bibliography{bibliography.bib}

@book {MR4655919,
    AUTHOR = {Anderson, David and Fulton, William},
     TITLE = {Equivariant cohomology in algebraic geometry},
    SERIES = {Cambridge Studies in Advanced Mathematics},
    VOLUME = {210},
 PUBLISHER = {Cambridge University Press, Cambridge},
      YEAR = {2024},
     PAGES = {xv+446},
      ISBN = {978-1-00-934998-7},
   MRCLASS = {14L30 (05E14 14-02 14F43 14Mxx 20G05 55N91)},
  MRNUMBER = {4655919},
MRREVIEWER = {Michael\ Orin\ Joyce},
}

@article {MR3795480,
    AUTHOR = {Thomas, Hugh and Yong, Alexander},
     TITLE = {Equivariant {S}chubert calculus and jeu de taquin},
   JOURNAL = {Ann. Inst. Fourier (Grenoble)},
  FJOURNAL = {Universit\'{e} de Grenoble. Annales de l'Institut Fourier},
    VOLUME = {68},
      YEAR = {2018},
    NUMBER = {1},
     PAGES = {275--318},
      ISSN = {0373-0956,1777-5310},
   MRCLASS = {05E10 (05E05 14N15)},
  MRNUMBER = {3795480},
MRREVIEWER = {Evgeny\ Smirnov},
       DOI = {10.5802/aif.3161},
       URL = {https://doi.org/10.5802/aif.3161},
}

@article {MR1853356,
    AUTHOR = {Graham, William},
     TITLE = {Positivity in equivariant {S}chubert calculus},
   JOURNAL = {Duke Math. J.},
  FJOURNAL = {Duke Mathematical Journal},
    VOLUME = {109},
      YEAR = {2001},
    NUMBER = {3},
     PAGES = {599--614},
      ISSN = {0012-7094,1547-7398},
   MRCLASS = {14M17 (14C17 14F43 14M15 17B37)},
  MRNUMBER = {1853356},
MRREVIEWER = {E.\ Aky\i ld\i z},
       DOI = {10.1215/S0012-7094-01-10935-6},
       URL = {https://doi.org/10.1215/S0012-7094-01-10935-6},
}

@article {MR1997946,
    AUTHOR = {Knutson, Allen and Tao, Terence},
     TITLE = {Puzzles and (equivariant) cohomology of {G}rassmannians},
   JOURNAL = {Duke Math. J.},
  FJOURNAL = {Duke Mathematical Journal},
    VOLUME = {119},
      YEAR = {2003},
    NUMBER = {2},
     PAGES = {221--260},
      ISSN = {0012-7094,1547-7398},
   MRCLASS = {14N15 (05E05 05E10 57R91 57S25)},
  MRNUMBER = {1997946},
       DOI = {10.1215/S0012-7094-03-11922-5},
       URL = {https://doi.org/10.1215/S0012-7094-03-11922-5},
}

@article {MR3109734,
    AUTHOR = {Anderson, David and Richmond, Edward and Yong, Alexander},
     TITLE = {Eigenvalues of {H}ermitian matrices and equivariant cohomology
              of {G}rassmannians},
   JOURNAL = {Compos. Math.},
  FJOURNAL = {Compositio Mathematica},
    VOLUME = {149},
      YEAR = {2013},
    NUMBER = {9},
     PAGES = {1569--1582},
      ISSN = {0010-437X,1570-5846},
   MRCLASS = {14M15 (05E15 14F43 15A18 57R91)},
  MRNUMBER = {3109734},
MRREVIEWER = {Huajun\ Huang},
       DOI = {10.1112/S0010437X13007343},
       URL = {https://doi.org/10.1112/S0010437X13007343},
}

@incollection {MR4381918,
    AUTHOR = {Robichaux, Colleen and Yadav, Harshit and Yong, Alexander},
     TITLE = {Equivariant cohomology, {S}chubert calculus, and edge labeled
              tableaux},
 BOOKTITLE = {Facets of algebraic geometry. {V}ol. {II}},
    SERIES = {London Math. Soc. Lecture Note Ser.},
    VOLUME = {473},
     PAGES = {284--335},
 PUBLISHER = {Cambridge Univ. Press, Cambridge},
      YEAR = {2022},
      ISBN = {978-1-108-79251-6; 978-1-108-87006-1},
   MRCLASS = {05E14 (14M15 14N15)},
  MRNUMBER = {4381918},
}

@incollection {MR85148,
    AUTHOR = {Hoffman, A. J. and Kruskal, J. B.},
     TITLE = {Integral boundary points of convex polyhedra},
 BOOKTITLE = {Linear inequalities and related systems},
    SERIES = {Ann. of Math. Stud., no. 38},
     PAGES = {223--246},
 PUBLISHER = {Princeton Univ. Press, Princeton, NJ},
      YEAR = {1956},
   MRCLASS = {90.0X},
  MRNUMBER = {85148},
MRREVIEWER = {D.\ Gale},
}

@article {MR4021852,
    AUTHOR = {Monical, Cara and Tokcan, Neriman and Yong, Alexander},
     TITLE = {Newton polytopes in algebraic combinatorics},
   JOURNAL = {Selecta Math. (N.S.)},
  FJOURNAL = {Selecta Mathematica. New Series},
    VOLUME = {25},
      YEAR = {2019},
    NUMBER = {5},
     PAGES = {Paper No. 66, 37},
      ISSN = {1022-1824,1420-9020},
   MRCLASS = {05E05 (05E10)},
  MRNUMBER = {4021852},
MRREVIEWER = {Allan\ Berele},
       DOI = {10.1007/s00029-019-0513-8},
       URL = {https://doi.org/10.1007/s00029-019-0513-8},
}

@article {MR2487491,
    AUTHOR = {Postnikov, Alexander},
     TITLE = {Permutohedra, associahedra, and beyond},
   JOURNAL = {Int. Math. Res. Not. IMRN},
  FJOURNAL = {International Mathematics Research Notices. IMRN},
      YEAR = {2009},
    NUMBER = {6},
     PAGES = {1026--1106},
      ISSN = {1073-7928,1687-0247},
   MRCLASS = {05E30},
  MRNUMBER = {2487491},
       DOI = {10.1093/imrn/rnn153},
       URL = {https://doi.org/10.1093/imrn/rnn153},
}

@book {MR1956926,
    AUTHOR = {Schrijver, Alexander},
     TITLE = {Combinatorial optimization. {P}olyhedra and efficiency. {V}ol.
              {C}},
    SERIES = {Algorithms and Combinatorics},
    VOLUME = {24},
      NOTE = {Disjoint paths, hypergraphs,
              Chapters 70--83},
 PUBLISHER = {Springer-Verlag, Berlin},
      YEAR = {2003},
     PAGES = {i--xxxiv and 1219--1881},
      ISBN = {3-540-44389-4},
   MRCLASS = {90-02 (05-02 52B55 68Q25 68R10 90C27 90C35 90C57)},
  MRNUMBER = {1956926},
MRREVIEWER = {Alexander\ I.\ Barvinok},
}

\end{document}